\documentclass[oneside]{amsart}
\usepackage[utf8]{inputenc}
\usepackage[percent]{overpic}
\usepackage{amsmath}
\usepackage{amssymb}
\usepackage{tikz}
\usepackage{tikz-cd}
\usepackage{mathtools}
\usepackage{stmaryrd}
\usepackage{amsthm}
\usepackage{array} 
\usepackage[english]{babel}
\usepackage{enumitem}
\usepackage{lmodern}
\usepackage{xfrac}  

\usepackage[paper=a4paper,  margin=2.991cm]{geometry}

\usepackage{csquotes}
\usepackage[style=alphabetic, backend=biber, maxbibnames=99, maxalphanames=4, minalphanames=4]{biblatex}
\DeclareFieldFormat{titlecase}{\ifcurrentfield{title}{\MakeSentenceCase*{#1}}{#1}}
\usepackage[dvipsnames]{xcolor}
\usepackage[hidelinks]{hyperref}
\hypersetup{
    pdftitle={Projected Brion's Theorem},
    pdfauthor={Dupraz, M.; Monin, L.},
    pdfkeywords= {Brion's theorem; integer-point transform; lattice projection; contracted class; equivariant localization; linear family of polytopes; Gelfand-Zetlin polytope; string polytope},
    pdfsubject= {},
    colorlinks=true, 
    citecolor=PineGreen, 
    linkcolor=RubineRed, 
    urlcolor=PineGreen
}

\usepackage{cleveref}

\usepackage{aliascnt}
\newcommand{\sharedtheorem}[2]{%
  \newaliascnt{#1}{theorem}\newtheorem{#1}[#1]{#2}\aliascntresetthe{#1}}

\theoremstyle{plain}
\newtheorem{theorem}{Theorem}[section]
\sharedtheorem{lemma}{Lemma}
\sharedtheorem{proposition}{Proposition}
\sharedtheorem{corollary}{Corollary}

\theoremstyle{definition}
\sharedtheorem{definition}{Definition}
\sharedtheorem{defprop}{Definition--Proposition}
\sharedtheorem{remark}{Remark}
\sharedtheorem{notation}{Notation}
\sharedtheorem{example}{Example}
\sharedtheorem{question}{Question}
\crefname{lemma}{Lemma}{Lemmas}
\crefname{proposition}{Proposition}{Propositions}
\crefname{corollary}{Corollary}{Corollaries}
\crefname{definition}{Definition}{Definitions}
\crefname{defprop}{Definition--Proposition}{Definition--Propositions}
\crefname{remark}{Remark}{Remarks}
\crefname{notation}{Notation}{Notations}
\crefname{example}{Example}{Examples}

\crefname{question}{question}{questions}
\Crefname{question}{Question}{Questions}

\newcommand{\rleft}{\mathopen{}\mathclose\bgroup\left}
\newcommand{\rright}{\aftergroup\egroup\right}

\newcommand{\C}{{\mathbb{C}}}
\newcommand{\K}{{\mathbb{K}}}
\newcommand{\R}{{\mathbb{R}}}
\newcommand{\Z}{{\mathbb{Z}}}
\newcommand{\N}{{\mathbb{N}}}

\newcommand{\Q}{{\mathbb{Q}}}

\renewcommand{\vec}[1]{\ensuremath{\mathbf{#1}}}

\newcommand{\cF}{{\mathcal{F}}}

\newcommand{\cP}{{\mathcal{P}}}

\newcommand{\cL}{{\mathcal{L}}}

\newcommand{\cV}{{\mathcal{V}}}
\newcommand{\cS}{{\mathcal{S}}}

\newcommand{\Conv}{\operatorname{Conv}}
\newcommand{\Cone}{\operatorname{Cone}}
\newcommand{\ord}{\operatorname{ord}}
\newcommand{\rec}{\operatorname{rec}}

\newcommand{\supp}{\operatorname{supp}}
\newcommand{\relint}{\operatorname{relint}}
\newcommand{\gen}{\operatorname{gen}}

\usepackage{pgf,tikz}
\usetikzlibrary{arrows,patterns}

\pgfdeclarepatternformonly{north east lines wide}%
  {\pgfqpoint{-1pt}{-1pt}}{\pgfqpoint{6pt}{6pt}}{\pgfqpoint{5pt}{5pt}}%
  {\pgfsetlinewidth{0.4pt}%
   \pgfpathmoveto{\pgfqpoint{0pt}{0pt}}%
   \pgfpathlineto{\pgfqpoint{6.1pt}{6.1pt}}%
   \pgfusepath{stroke}}
\pgfdeclarepatternformonly{north west lines wide}%
  {\pgfqpoint{-1pt}{-1pt}}{\pgfqpoint{6pt}{6pt}}{\pgfqpoint{5pt}{5pt}}%
  {\pgfsetlinewidth{0.4pt}%
   \pgfpathmoveto{\pgfqpoint{0pt}{6pt}}%
   \pgfpathlineto{\pgfqpoint{6.1pt}{-0.1pt}}%
   \pgfusepath{stroke}}

\title{Projected Brion's Theorem}
\author{Matthew Dupraz
    \and
    Leonid Monin
}

\newcommand{\Addresses}{{
  \bigskip
  \footnotesize

  Matthew~Dupraz, \textsc{Institut für Mathematik, Freie Universität Berlin}\par\nopagebreak
  \textit{E-mail address}: \texttt{matthew.dupraz@fu-berlin.de}

  \medskip

  Leonid~Monin, \textsc{\'Ecole Polytechnique F\'ed\'erale de Lausanne}\par\nopagebreak
  \textit{E-mail address}: \texttt{leonid.monin@epfl.ch}

}}

\subjclass[2020]{Primary 52B20, 05A15; Secondary 14M25, 05E10, 19L47}
\keywords{Brion's theorem, integer-point transform, lattice projection, contracted class,
equivariant localization, linear family of polytopes, Gelfand--Zetlin polytope, string polytope}

\begin{document}

\begin{abstract}
    We present an analogue of Brion's theorem for the projected integer-point transform.
    Brion's theorem writes the integer-point transform of a rational polyhedron $P$ as the
    sum of the rational functions associated to its vertex cones.
    The pushforward $\pi_*$ along a lattice projection $\pi$ turns the transform of $P$
    into the generating function of the numbers of lattice points in the fibres of $\pi$.
    We group the vertices of $P$ into \emph{contracted classes},
    generated by the edges whose direction $\pi$ kills, and show that the sum of the rational functions associated to the vertex cones in a contracted class can be pushed forward along $\pi$, yielding a rational function decomposition of the projected integer-point transform.
    For a lattice polytope, the classes correspond to the connected components of the fixed locus of the associated toric variety under the action of a subtorus, and our formula resembles the
    localization formula for this subtorus.
    We use this decomposition to provide an explicit formula for the projected integer-point transform of a linear family of polytopes. We illustrate this result with families of Gelfand--Zetlin and string polytopes.
\end{abstract}

\maketitle

\section{Introduction}
\label{sec:intro}

Let $P \subseteq \R^n$ be a rational polyhedron and let
\[
    \sigma(P; \vec x) = \sum_{\vec a \in P \cap \Z^n} \vec x^{\vec a}
\]
be its integer-point transform, which is a formal Laurent series encoding the lattice points of $P$.
It is well known that  $ \sigma(P; \vec x)$ corresponds to a rational function, which we denote by
$\varsigma(P; \vec x)$ (\Cref{def:varsigma}). Brion's theorem (\cite{Brion88} for lattice polytopes,
\cite[Theorem 3.5]{Barvinok1999AnAT} for rational polyhedra) states that
\[
    \varsigma(P; \vec x) = \sum_{v \textrm{ vertex of } P} \varsigma(K_v; \vec x),
\]
where $K_v$ is the vertex cone of $P$ at $v$.

Let $\pi: \R^n \to \R^m$ be a projection with
$\pi(\Z^n) \subseteq \Z^m$. When the fibres of $\pi$ meet $P$ in finitely many lattice points, the
induced map $\pi_*: \vec x^{\vec a} \mapsto \vec y^{\pi(\vec a)}$ carries $\sigma(P; \vec x)$ to
the generating function of the fibre counts $\#(P \cap \pi^{-1}(\vec b) \cap \Z^n)$
(\Cref{fig:intro-projection}).

\begin{figure}[ht]
    \centering
    \begin{tikzpicture}[scale=0.7, font=\small, line join=round]
        \foreach \i in {-1, ..., 6} \foreach \j in {-1, ..., 4} \fill[black!35] (\i, \j) circle (0.04);
        \fill[red!60!black!10] (0, 1) -- (2, 0) -- (5, 1) -- (5, 3) -- (1, 3) -- cycle;
        \foreach \i in {0, ..., 5} \draw[thin, dotted, black!45] (\i, 4) -- (\i, -2);
        \draw[semithick, red!40!black] (0, 1) -- (2, 0) -- (5, 1) -- (5, 3) -- (1, 3) -- cycle;
        \draw[line width=1.6pt, red!40!black] (5, 1) -- (5, 3);
        \foreach \p in {(0, 1), (1, 1), (1, 2), (1, 3), (2, 0), (2, 1), (2, 2), (2, 3), (3, 1),
            (3, 2), (3, 3), (4, 1), (4, 2), (4, 3), (5, 1), (5, 2), (5, 3)} \fill \p circle (0.08);
        \node[fill=red!60!black!10, inner sep=1pt] at (3.5, 2.5) {$P$};
        \draw[->, semithick] (6.6, 2.7) -- node[right] {$\pi$} (6.6, -0.5);
        \draw[thin] (-1.3, -2) -- (6.3, -2);
        \foreach \i/\n in {0/1, 1/3, 2/4, 3/3, 4/3, 5/3} {
            \fill (\i, -2) circle (0.08);
            \node[below=2pt] at (\i, -2) {$\n$};
        }
        \node[right] at (6.3, -2) {$\R$};
    \end{tikzpicture}
    \caption{A lattice polygon $P \subseteq \R^2$ and the projection $\pi(a_1, a_2) = a_1$. The number
    below each point of $\Z$ is the number of lattice points of $P$ in its fibre, so that
    $\pi_*\sigma(P; \vec x) = 1 + 3y + 4y^2 + 3y^3 + 3y^4 + 3y^5$. The thick edge is contracted,
    and its two vertices form a contracted class.}
    \label{fig:intro-projection}
\end{figure}

One would like a formula for $\pi_*\sigma(P; \vec x)$
assembled from the local data at the vertices of $P$, as Brion's theorem supplies before the projection.
Applying $\pi_*$ to each summand coming from Brion's theorem does not work.
Indeed, $\pi_*$ is defined only for rational functions that can be written with a denominator outside of
$\ker \pi_*$. The rational function associated to a
vertex cone has a denominator with factors $1 - \vec x^{\vec u}$, one
for each ray with primitive direction $\vec u$, and $1 - \vec x^{\vec u}$ lies in $\ker \pi_*$ exactly when $\pi(\vec u) = 0$.
We resolve this issue by grouping the vertices into \emph{contracted classes}.

Call an edge of $P$ \emph{contracted} if $\pi$ kills its direction. For a bounded edge, this means
that its endpoints have the same image under $\pi$. The contracted edges generate an equivalence relation on the set of vertices of $P$, whose equivalence classes we call the
\emph{contracted classes} of $P$. Our main result (\Cref{thm:projected-brion}) states that if
no unbounded edge of $P$ is contracted, the rational functions of the vertex
cones in a contracted class add up to a rational function with a well-defined pushforward, and
\[
    \pi_*\varsigma(P; \vec x) = \sum_{S} \pi_*\left(\sum_{v \in S} \varsigma(K_v; \vec x)\right),
\]
the outer sum running over the contracted classes. Furthermore, for lattice polyhedra, \Cref{prop:minimality} shows that contracted classes yield the finest grouping that makes this work.

To relate the left-hand side of the equation to the projected integer-point transform $\pi_*\sigma(P; \vec x)$, we rely on the fact that the pushforward $\pi_*$ is well behaved with respect to taking rational functions of Laurent series. To any \emph{summable} Laurent series $f$, we may assign a rational function $\Phi(f)$ (\Cref{def:summable}), and one should expect that taking $\Phi$ commutes with pushing forward along the projection. In \Cref{thm:phi-localizes} we show that when a summable $f$ has support with finite fibres with respect to the projection $\pi$, the pushforward $\pi_*(f)$ is again summable. Moreover, $\Phi(f)$ can be written with a denominator avoiding $\ker \pi_*$, so it can be pushed forward, and
\[\Phi(\pi_*(f)) = \pi_*(\Phi(f)).\]
Applied to $\sigma(P; \vec x)$, this shows that if
$P$ meets every fibre of $\pi$ in finitely many lattice points, then $\pi_*\varsigma(P; \vec x)$ is
the rational function associated to $\pi_*\sigma(P; \vec x)$.

\begin{example}[Toric varieties]\label{ex:intro-toric}
    Let $P$ be a full-dimensional lattice polytope, $X$ its toric variety over $\C$ and $\cL$ the ample line bundle
    corresponding to $P$. Then $\sigma(P; \vec x)$ is the character of $H^0(X, \cL)$, and $\pi_*$
    restricts it to the subtorus $T'$ dual to $\pi$. The contracted classes correspond to the
    connected components of the fixed locus $X^{T'}$, and \Cref{thm:projected-brion} is then
    Thomason's localization theorem in $T$-equivariant $K$-theory, for the subtorus $T'$ (\Cref{rem:localization}).
\end{example}

\begin{example}[Weight multiplicities]\label{ex:intro-weights}
    Let $P$ be the Gelfand--Zetlin polytope of a regular dominant weight $\lambda$ and $\pi$ its weight
    map. The fibre counts are the weight multiplicities of the irreducible representation of
    highest weight $\lambda$. The weight map contracts no edge of $P$ (\Cref{sec:gz-string}), so
    every contracted class is a single vertex and \Cref{thm:projected-brion} is Brion's theorem pushed
    forward term by term. In fact, \cite[Theorem 1.3]{Makhlin16} shows that this  recovers Weyl's character formula. On the other hand, for string
    polytopes, which generalize Gelfand--Zetlin polytopes to other groups and reduced words, the weight
    map can contract edges, so that the contracted classes need not be singletons
    (\Cref{ex:string-polytopes}).
\end{example}

\Cref{thm:projected-brion} extends to twisted pushforwards
$\vec x^{\vec a} \mapsto \theta(\vec a)\, \vec y^{\pi(\vec a)}$ for a homomorphism
$\theta: \Z^n \to \K^\times$, where we say that an edge is contracted 
if $\theta$ is also trivial on
its direction. The theorem can also be used to study
transforms with values in the group ring of a finitely generated abelian
group $G$, which count the lattice points in the fibres of a homomorphism $\Z^n \to G$
(\Cref{rem:twisted}).

For a linear family of lattice polytopes $\Delta(\vec c)$, \Cref{thm:linear-family} shows that the terms of
\Cref{thm:projected-brion} depend in a precise way on $\vec c$, i.e.,
\[
    \pi_*\varsigma(\Delta(\vec c); \vec x) = \sum_S \vec y^{\vec a_S(\vec c)}\,
    \frac{g_S(\vec c)}{\prod_{\vec u}\bigl(1 - \vec y^{\pi(\vec u)}\bigr)},
\]
where $\vec u$ runs over the edge directions at $S$ not contracted by $\pi$, $\vec a_S$ is linear,
and $g_S \in \K[\vec y^{\pm 1}][\vec c]$ is a polynomial in $\vec c$ of degree bounded by the 
dimension of the affine span of the vertices in $S$. For string polytopes, this provides one way to
give Littelmann's character formula the shape needed to extend it to all weights
(\Cref{sec:gz-string}).

\subsection*{Related work}

Our setting is very close to those of Berline--Vergne and of Peterson, who study the sum
\[
    S(P; \xi) = \sum_{\vec m \in \Z^n \cap P} e^{\langle \xi, \vec m \rangle}
\]
for $\xi$ varying over complex-valued linear functionals.
Up to a change of variables, this is the integer-point transform, and Brion's theorem also applies, so that
\[
    S(P; \xi) = \sum_{v\text{ vertex of }P}S(K_v; \xi),
\]
where each $S(K_v; \xi)$ is a meromorphic function of $\xi$.
Berline and Vergne show in \cite[Theorem 58]{BerlineVergne12} that for simple
polytopes, grouping the vertices of faces on which some $\xi$ is constant gives a function which is holomorphic at $\xi$.
This agrees with \Cref{thm:projected-brion}, as for simple polytopes,
the contracted classes are vertex sets of faces, but in general they need
not be (\Cref{ex:pyramid-classes}).

Peterson writes an analogous degenerate Brion formula in \cite{Peterson25}.
Rather than grouping Brion's vertex terms, he writes $P$ as a signed sum of indicator functions of polyhedra depending on a fixed $\xi$, whose transforms are all holomorphic at $\xi$.

When no edge is contracted, Brion and Vergne push a parametric Brion formula forward term by term
\cite[Section 3.4]{BrionVergne97}, and \Cref{thm:linear-family} extends this to arbitrary $\pi$.
For linear families we compare \Cref{thm:linear-family} with earlier work of
Peterson \cite{Peterson23} in \Cref{rem:peterson}.

\subsection*{Acknowledgements}
The authors would like to thank Christian Haase,
Igor Makhlin, Evgeny Smirnov for insightful discussions on the topic. The main part of the work was done while LM was a visitor at the ETH
Institute for Theoretical Studies. LM thanks the ITS for its hospitality and inspiring research environment.

The work of MD and LM was supported by the SPP 2458 ``Combinatorial Synergies'', funded by the
Deutsche Forschungsgemeinschaft (DFG, German Research Foundation) -- project number 539974215, as
well as by the Swiss National Science Foundation (SNSF) -- grant 224099.

\subsection*{Declaration of AI use}

The authors used the AI model Claude Opus 5.5 (Anthropic) to edit the exposition,
proofread, create figures, and carry out computer experiments with string polytopes.
The proof of \Cref{thm:phi-localizes} was developed with the help of Claude. The authors verified all
mathematical content and take full responsibility for it.

\section{Summable series and their pushforward}\label{sec:transform}

\subsection{Summable series}\label{subsec:summable}

Fix an arbitrary field $\K$, and write $\K[\vec x^{\pm 1}] := \K[x_1^{\pm 1}, \dots, x_n^{\pm 1}]$ for the ring
of Laurent polynomials, and $\K\llbracket \vec x^{\pm 1}\rrbracket$ for the space of formal Laurent
series, which are arbitrary formal sums $\sum_{\vec a \in \Z^n} c_{\vec a} \vec x^{\vec a}$. Generally, it is not possible to multiply two such series, because infinitely many terms may contribute to a given monomial. On the other hand, multiplication by
a Laurent polynomial is well defined and makes $\K\llbracket \vec x^{\pm 1}\rrbracket$ a module over
$\K[\vec x^{\pm 1}]$. The fraction field of $\K[\vec x^{\pm 1}]$ is the rational function field
$\K(\vec x)$.

\begin{defprop}[{\cite[Definition 3.6]{HaaseNillPaffenholz21}}]\label{def:summable}
    We define the subspace $\cS_{\vec x} \subseteq \K\llbracket \vec x^{\pm 1} \rrbracket$ of
    \emph{summable Laurent series} to be the set of Laurent series
    $f \in \K\llbracket \vec x^{\pm 1} \rrbracket$ for which there exists a nonzero Laurent
    polynomial $q \in \K[\vec x^{\pm 1}]$ such that $q \cdot f$ is a Laurent polynomial $p$. We then define
    \[
        \Phi: \cS_{\vec x} \to \K(\vec x), \qquad f \mapsto p/q.
    \]
    Then $\Phi$ is well defined, a $\K[\vec x^{\pm 1}]$-module homomorphism, and is moreover multiplicative: if $f_1, f_2 \in \cS_{\vec x}$ and every coefficient of
    $f_1 f_2$ is a finite sum, then $f_1 f_2 \in \cS_{\vec x}$ and $\Phi(f_1 f_2) = \Phi(f_1)\Phi(f_2)$.
\end{defprop}

\begin{proof}
    If $q f = p$ and $q' f = p'$, then $q' p = q' q f = q p'$, so that $p/q = p'/q'$, which shows that $\Phi$ is well defined. If
    $q_1 f_1 = p_1$ and $q_2 f_2 = p_2$, then $q_1 q_2 (f_1 + f_2) = q_2 p_1 + q_1 p_2$, and
    $q_1 (g f_1) = g p_1$ for $g \in \K[\vec x^{\pm 1}]$, so that $\cS_{\vec x}$ is a
    $\K[\vec x^{\pm 1}]$-submodule and $\Phi$ is a module homomorphism. Finally,
    $q_1 q_2 \cdot f_1 f_2 = (q_1 f_1)(q_2 f_2) = p_1 p_2$, since the coefficient of a monomial on
    either side is the same sum, which has only finitely many nonzero terms because $q_1$ and $q_2$
    are Laurent polynomials and the coefficients of $f_1 f_2$ are finite sums.
\end{proof}

Let $\pi: \R^n \to \R^m$ be a projection, with
$\pi(\Z^n) \subseteq \Z^m$, and let $\vec y := (y_1, \dots, y_m)$.
The projection $\pi$
induces a pushforward map
\begin{align*}
    \pi_* : \K[\vec x^{\pm 1}] &\to \K[\vec y^{\pm 1}]\\
    \vec x^{\vec a} &\mapsto \vec y^{\pi(\vec a)}.
\end{align*}
Let
$f = \sum_{\vec a \in \Z^n} f_{\vec a} \vec x^{\vec a} \in \K\llbracket \vec x^{\pm 1} \rrbracket$
be a Laurent series and denote by $\supp(f)$ the support of $f$, i.e., the set
$\{\vec a \in \Z^n : f_{\vec a} \neq 0\}$. Let
\[
    \cF_{\pi} := \{f \in \K\llbracket \vec x^{\pm 1} \rrbracket : \textrm{for all } \vec b \in \Z^m, \#(\pi^{-1}(\vec b) \cap \supp(f)) < \infty \},
\]
the set of Laurent series whose support meets every fibre of $\pi$ in finitely many points.
For any $f \in \cF_{\pi}$, we have a well-defined pushforward
\[
    \pi_*(f) = \sum_{\vec a \in \Z^n}f_{\vec a}\vec y^{\pi(\vec a)}
    = \sum_{\vec b \in \Z^m}\left(\sum_{\vec a \in \pi^{-1}(\vec b) \cap \Z^n}f_{\vec a}
    \right)\vec y^{\vec b}.
\]
The space $\cF_\pi$ is closed under multiplication by Laurent polynomials, and for all $q \in \K[\vec x^{\pm 1}]$, we have that
$\pi_*(q \cdot f) = \pi_*(q)\,\pi_*(f)$.

The kernel of the map $\pi_*$ is prime,
as its image is an integral domain; we may therefore localize at $\ker \pi_*$ and extend $\pi_*$ to
a homomorphism
\[
    \pi_*: \K[\vec x^{\pm 1}]_{\ker \pi_*} \to \K(\vec y), \qquad p/q \mapsto \pi_*(p)/\pi_*(q),
\]
where $\K[\vec x^{\pm 1}]_{\ker \pi_*} \subseteq \K(\vec x)$ consists of those rational functions
that can be written as a fraction whose denominator avoids $\ker \pi_*$.
We thus have two notions of
pushforward, one for Laurent series and one for rational functions, and we would like to compare them.

We denote by $\cS_{\vec y}$ the set of summable Laurent series in the 
variables $\vec y$, and the homomorphism taking a summable Laurent series to its rational function also by $\Phi$.
Let
\[
    \cS_{\pi} := \cS_{\vec x} \cap \cF_{\pi}.
\]
This is the largest set of Laurent series $f$ on which both $\Phi$ and $\pi_*$ are well defined, and on
it the two notions of pushforward agree, as the following theorem shows.

\begin{theorem}\label{thm:phi-localizes}
    Let $f \in \cS_{\pi}$. Then $\pi_*(f)$ is summable, $\Phi(f)$ lies in
    $\K[\vec x^{\pm 1}]_{\ker \pi_*}$, and
    \[
        \Phi\left(\pi_*(f)\right) = \pi_*\left(\Phi(f)\right).
    \]
\end{theorem}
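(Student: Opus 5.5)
The plan is to reduce to a coordinate situation in which $\pi_*$ becomes "set some variables equal to $1$". Then Taylor expansion at that point handles summability, and a divisibility argument handles the localization statement.

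\emph{Step 1: coordinates.} The lattice $\Lambda := \ker\pi \cap \Z^n$ is saturated, so we can choose a basis of $\Z^n$ whose last $k$ vectors span $\Lambda$. Write $\vec x^{\vec a} = \vec w^{\vec b}\vec z^{\vec c}$ with $\vec b \in \Z^{n-k}$ and $\vec c \in \Z^k$. Then $\pi_*$ sends $\vec z \mapsto 1$ and maps $\vec w^{\vec b} \mapsto \vec y^{\pi(\vec b)}$ injectively on monomials. Moreover, $\ker \pi_*$ is the prime ideal $\mathfrak p = (z_1-1,\dots,z_k-1)$. The condition $f \in \cF_\pi$ says exactly that $f = \sum_{\vec b} f_{\vec b}(\vec z)\,\vec w^{\vec b}$ with every $f_{\vec b} \in A := \K[\vec z^{\pm1}]$ a Laurent polynomial. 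We may therefore work in $A[\vec w^{\pm1}]$ and with series in $\vec w$ having coefficients in $A$. Since the map on $\vec w$-monomials is an injective monomial change of variables, it preserves summability and $\Phi$, so it suffices to treat the substitution $\vec z = 1$.

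\emph{Step 2: summability of $\pi_*(f)$.} Take $q f = p$ with $q \neq 0$. Substitute $z_i = 1 + e_i$ and expand in $\K[\vec w^{\pm1}][[\vec e]]$, which is legitimate in any characteristic because $(1+e_i)^{-1}$ is a power series. Each $f_{\vec b}$ becomes a power series in $\vec e$, and $f = f_0 + f_1 + \cdots$ by $\vec e$-degree, with $f_0 = \pi_*(f)$. Let $q_d \neq 0$ be the lowest-degree part of $q$. Because $q$ has finitely many terms, the degree-$d$ part of $qf$ can be computed coefficientwise in $\vec w$, and it equals $q_d f_0$. Choose a monomial $\vec e^{\alpha}$ occurring in $q_d$ and extract its coefficient. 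Since $f_0$ involves no $\vec e$, this gives
\[
q_{d,\alpha}(\vec w)\, \pi_*(f) = p_{d,\alpha}(\vec w),
\]
where $q_{d,\alpha}$ is a nonzero Laurent polynomial and $p_{d,\alpha}$ is a Laurent polynomial. Hence $\pi_*(f)$ is summable.

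\emph{Step 3: $\Phi(f) \in \K[\vec x^{\pm1}]_{\mathfrak p}$, and the identity.} The ring $\K[\vec x^{\pm1}]$ is a UFD, and $J := \{r : rf \in \K[\vec x^{\pm1}]\}$ is an ideal. Writing $\Phi(f) = p_0/q_0$ in lowest terms gives $J = (q_0)$, so the claim is that $q_0 \notin \mathfrak p$. Suppose instead that $q_0 \in \mathfrak p$, so $d \ge 1$ in Step 2. Apply Step 2 to $q_0 f = p_0$: the degree-$<d$ parts of $p_0$ vanish, so $p_0$ also lies in $\mathfrak p^{d}$ in the graded sense, and I would try to show that a common prime factor $g \in \mathfrak p$ of $q_0$ divides $p_0$. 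The mechanism I would use is that multiplication by $h := q_0/g$ keeps $hf$ a series with $A$-coefficients, and that $g\cdot(hf) = p_0$ forces $hf$ to be a Laurent polynomial. This would contradict minimality of $q_0$.

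The main obstacle is precisely this last point. Multiplication by a Laurent polynomial can send an infinite series to a finite one (for instance $(1-w)\sum w^b$), so one must use $g \in \mathfrak p$ together with the fibre-finiteness of $f$. I would first try induction on $k$: for $k=1$, $\mathfrak p$ is principal, and the $\vec e$-order is a discrete valuation, so comparing $\operatorname{ord}(p_0) \ge \operatorname{ord}(q_0)$ suffices. For general $k$, I would pass through the chain $(z_1-1) \subset (z_1-1,z_2-1) \subset \cdots$, treating $z_2,\dots,z_k$ as part of the coefficient ring at each stage.

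Once $\pi_*(q_0) \neq 0$, the identity follows at once. Applying $\pi_*$ to $q_0 f = p_0$ and using $\pi_*(q\cdot f) = \pi_*(q)\pi_*(f)$ gives $\pi_*(q_0)\pi_*(f) = \pi_*(p_0)$. Therefore
\[
\Phi(\pi_*(f)) = \pi_*(p_0)/\pi_*(q_0) = \pi_*(\Phi(f)).
\]
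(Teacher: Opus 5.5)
Your Steps 1 and 2 are correct. Step 2 is the $\beta = 0$ case of the paper's argument; note only that $f$ lives in $\K\llbracket \vec w^{\pm 1}\rrbracket\llbracket \vec e\rrbracket$, not $\K[\vec w^{\pm 1}]\llbracket \vec e\rrbracket$. The gap is Step 3, which is the real content of the theorem, and the route you sketch there does not close.

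First, the claim $J = (q_0)$ is false. Multiplication by a nonzero Laurent polynomial is not injective on Laurent series, so from $(hq_0)f = hp_0$ you cannot cancel $h$. Take $n = 2$, $\pi(\vec a) = a_1$ and $f = \sum_{b \in \Z} x_1^b \in \cS_\pi$. Then $(1 - x_1)f = 0$, so $\Phi(f) = 0/1$ in lowest terms, yet $1 \cdot f \neq 0$. This invalidates ``apply Step 2 to $q_0 f = p_0$''. It also breaks your derivation of the identity, which in this example would assert $\sum_{b \in \Z} y^b = 0$. To push a series relation forward you need one of the form $rf = s$ with $r \notin \mathfrak p$, and you never produce it.

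Second, for $k \geq 2$ neither proposed mechanism can work, because $\mathfrak p$ is not principal. Comparing $\vec e$-adic orders of $p$ and $q$ does not control divisibility by height-one primes inside $\mathfrak p$ such as $e_1 + e_2$. For instance, $e_1/e_2$ has numerator and denominator of the same $\vec e$-adic order but is not in $\K[\vec x^{\pm 1}]_{\mathfrak p}$. Nor do the conclusions of the rank-one case, iterated along a flag, imply membership in $\K[\vec x^{\pm 1}]_{\mathfrak p}$. The function $(z_2 - 1)/(z_1 + z_2 - 2)$ lies in the localization at $(z_1 - 1)$ and restricts to $1$ at $z_1 = 1$, yet its reduced denominator lies in $\mathfrak p$.

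The missing idea is to keep the \emph{whole} $\vec e$-expansion of $f$ and then descend from the completion. Run your Step 2 as an induction along a monomial order, with $q_\varepsilon$ the lowest nonzero coefficient of $q$, using the recursion $q_\varepsilon f_\beta = p_{\varepsilon + \beta} - \sum_{\alpha \succ \varepsilon,\, \alpha \leq \varepsilon + \beta} q_\alpha f_{\varepsilon + \beta - \alpha}$. This shows that \emph{every} coefficient $f_\beta$ of $f = \sum_\beta f_\beta \vec e^\beta$ is summable, not just $f_0$. Then $\widehat\varphi := \sum_\beta \Phi(f_\beta)\vec e^\beta \in \K(\vec w)\llbracket \vec e\rrbracket$ satisfies $q\widehat\varphi = p$. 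The truncations of $\widehat\varphi$ lie in the local ring $A := \K[\vec x^{\pm 1}]_{\mathfrak p}$, so $p \in qA + \mathfrak p^k A$ for every $k$, and Krull's intersection theorem gives $p \in qA$, i.e.\ $\Phi(f) \in A$. Moreover $\Phi(f) = \widehat\varphi$ in $\K(\vec w)\llbracket \vec e\rrbracket$, and $\pi_*$ on $A$ takes the constant term. Hence $\pi_*\Phi(f) = \Phi(f_0) = \Phi(\pi_* f)$. This is how the paper obtains the identity without needing any series relation whose denominator avoids $\mathfrak p$.

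For $k = 1$ your valuation argument can be rescued if you apply it to $qf = p$ itself rather than to $q_0 f = p_0$. Multiplication by $z_1 - 1$ acts coefficientwise on $\cF_\pi = \K[z_1^{\pm 1}]\llbracket \vec w^{\pm 1}\rrbracket$, hence injectively, so $(z_1 - 1)^d$ can be cancelled from $qf = p$. For $k \geq 2$ the factors of $q$ lying in $\mathfrak p$ need not lie in $\K[\vec z^{\pm 1}]$, and this trick is unavailable.
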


\begin{proof}
    \emph{Setup and expansion in $\vec w$.} We may assume $\pi(\Z^n) = \Z^m$, since replacing $\Z^m$ by
    $\pi(\Z^n)$ does not change $\ker \pi_*$, and as $\K[\pi(\Z^n)] \to \K[\vec y^{\pm 1}]$ is
    injective, it changes neither whether $\pi_*(f)$ is summable nor its rational function. As
    $\Z^m$ is free, $\Z^n \cong (\ker \pi \cap \Z^n) \oplus \Z^m$, which writes
    $\K[\vec x^{\pm 1}] = \K[\vec y^{\pm 1}][\vec z^{\pm 1}]$, where $\vec z = (z_1, \dots, z_d)$
    are the variables of $\ker \pi \cap \Z^n$ and $d := \dim \ker \pi$. Furthermore, we have that
    $\cF_\pi = \K[\vec z^{\pm 1}]\llbracket \vec y^{\pm 1}\rrbracket$. Put $w_i := z_i - 1$. Since
    $z_i = 1 + w_i$ is a unit of $\K\llbracket \vec w \rrbracket$, we have
    $\K[\vec z^{\pm 1}] \subseteq \K\llbracket \vec w \rrbracket$ and so we may see $\cF_\pi$ as a
    subset of $\K\llbracket \vec w \rrbracket\llbracket \vec y^{\pm 1} \rrbracket
    = \K\llbracket \vec y^{\pm 1} \rrbracket\llbracket \vec w \rrbracket$. Under this
    identification $\pi_*: \cF_\pi \to \K\llbracket \vec y^{\pm 1} \rrbracket$ sets $\vec w = 0$.
    Fix a denominator $q \in \K[\vec x^{\pm 1}]$ of $f$. Multiplying it by $\vec z^{\vec c}$ for a
    suitable $\vec c \in \Z^d$, we may assume that $q$ and $p := q \cdot f$ lie in
    $\K[\vec y^{\pm 1}][\vec z] = \K[\vec y^{\pm 1}][\vec w]$.
    Grouping terms by their power of $\vec w$, we write
    \[
        q = \sum_{\beta \in \N^d} q_\beta \vec w^\beta, \qquad p = \sum_{\beta \in \N^d} p_\beta \vec w^\beta, \qquad f = \sum_{\beta \in \N^d} f_\beta \vec w^\beta
    \]
    with $q_\beta, p_\beta \in \K[\vec y^{\pm 1}]$ and
    $f_\beta \in \K\llbracket \vec y^{\pm 1} \rrbracket$. As $\pi_*$ sets $\vec w = 0$, we have
    $f_0 = \pi_*(f)$. Comparing the coefficients of $\vec w^\gamma$ in $q \cdot f = p$ gives, in
    $\K\llbracket \vec y^{\pm 1} \rrbracket$,
    \begin{equation}\label{eq:graded-denominator}
        \sum_{\alpha \leq \gamma} q_\alpha f_{\gamma - \alpha} = p_\gamma \qquad \text{for all } \gamma \in \N^d,
    \end{equation}
    where $\alpha \leq \gamma$ is meant componentwise.

    \emph{Summability of the $f_\beta$.} We show that every $f_\beta$ lies in $\cS_{\vec y}$. Fix a
    monomial order $\prec$ on $\N^d$, and let $\varepsilon$ be the least $\alpha$ with
    $q_\alpha \neq 0$. For $\gamma = \varepsilon + \beta$, \eqref{eq:graded-denominator} reads
    \[
        q_\varepsilon f_\beta = p_{\varepsilon + \beta} - \sum_{\substack{\alpha \succ \varepsilon \\ \alpha \leq \varepsilon + \beta}} q_\alpha f_{\varepsilon + \beta - \alpha},
    \]
    where each term of the sum has $\varepsilon + \beta - \alpha \prec \beta$. For $\beta = 0$ the
    sum is empty, as $\alpha \leq \varepsilon$ implies $\alpha \preceq \varepsilon$, so that
    $q_\varepsilon f_0 = p_\varepsilon$, and $f_0 = \pi_*(f)$ is summable. By induction along
    $\prec$ the right-hand side lies in $\cS_{\vec y}$, and hence so does $f_\beta$. Indeed, if $r$ is a
    denominator of $q_\varepsilon f_\beta$, then $r q_\varepsilon$ is one of $f_\beta$, as
    $q_\varepsilon \neq 0$.

    \emph{Conclusion.} The ideal $(\vec w)$ generated by $w_1, \dots, w_d$ is $\ker \pi_*$, so
    $A := \K[\vec x^{\pm 1}]_{\ker \pi_*}$ is a Noetherian local ring with maximal ideal $(\vec w)$.
    It contains $\K(\vec y)$, as the nonzero elements of $\K[\vec y^{\pm 1}]$ lie outside
    $(\vec w)$. Let $\varphi_\beta := \Phi(f_\beta) \in \K(\vec y)$
    and $\widehat \varphi := \sum_\beta \varphi_\beta \vec w^\beta \in \K(\vec y)\llbracket \vec w \rrbracket$.
    Applying $\Phi: \cS_{\vec y} \to \K(\vec y)$, which is $\K[\vec y^{\pm 1}]$-linear and the
    identity on $\K[\vec y^{\pm 1}]$, to \eqref{eq:graded-denominator} gives
    $q \cdot \widehat \varphi = p$. For $k \in \N$, the truncation
    $\varphi_{<k} := \sum_{|\beta| < k} \varphi_\beta \vec w^\beta$ lies in $A$, and
    $p - q\varphi_{<k} = q(\widehat \varphi - \varphi_{<k})$ is a polynomial in $\vec w$ without
    terms of degree less than $k$, so that $p \in qA + (\vec w)^k$. By Krull's intersection theorem
    \cite[Theorem 8.10(i)]{Matsumura86}, $\bigcap_k (qA + (\vec w)^k) = qA$, so $p \in qA$
    and hence $\Phi(f) = p/q \in A$. Note that $A$ is a subring of
    $\K(\vec y)\llbracket \vec w \rrbracket$, and as $q \cdot \Phi(f) = p = q \cdot \widehat \varphi$
    there, $\Phi(f) = \widehat \varphi$. The map $\pi_*: A \to \K(\vec y)$ is then just taking the
    constant term in $\vec w$, so $\pi_*(\Phi(f)) = \varphi_0 = \Phi(\pi_*(f))$.
\end{proof}

\begin{remark}\label{rem:phi-localizes-generality}
    For transforms of polyhedra, the conclusion of \Cref{thm:phi-localizes} can also be seen as
    follows. If $P$ is a pointed rational polyhedron with $\ker \pi \cap \rec(P) = \{0\}$, 
    let $q$ be the product of the $1 - \vec x^{\vec u}$ over the primitive ray
    generators $\vec u$ of $\rec(P)$. By \Cref{lemma:polyhedron-denominator},
    $p := q\,\varsigma(P; \vec x)$ is a Laurent polynomial, so $q \cdot \sigma(P; \vec x) = p$
    holds also on the level of Laurent series. Indeed, if $r \neq 0$ is a denominator of $\sigma(P; \vec x)$, then
    $r\,(q \cdot \sigma(P; \vec x) - p) = 0$, and the Laurent series supported in $Q + \rec(P)$ for
    some polytope $Q$ form an integral domain. By assumption we have $\pi_*(q) \neq 0$, so we get
    $\pi_*(p) = \pi_*(q)\,\pi_*(\sigma(P; \vec x))$, which is the whole statement of \Cref{thm:phi-localizes}
    in this setting. 
    We state \Cref{thm:phi-localizes} in full generality since it may be of
    independent interest.
\end{remark}

\subsection{Integer-point transforms}\label{subsec:transforms}

We now specialize to the Laurent series that arise from polyhedra.

\begin{definition}
    Let $\cV$ be the $\K$-vector space of functions $\R^n \to \K$ spanned by the indicator functions
    $\mathbf{1}_P$ of rational polyhedra $P \subseteq \R^n$, whose elements we call \emph{convex chains}.

    Denote by $\sigma: \cV \to \K\llbracket \vec x^{\pm 1} \rrbracket$ the $\K$-linear map
    \[
        \sigma(\varphi; \vec x) := \sum_{\vec a \in \Z^n} \varphi(\vec a)\, \vec x^{\vec a}.
    \]
    For a rational polyhedron $P$ we write
    \[
        \sigma(P; \vec x) := \sigma(\mathbf{1}_P; \vec x) = \sum_{\vec a \in P \cap \Z^n} \vec x^{\vec a}
    \]
    and call it the \emph{integer-point transform} of $P$.
\end{definition}

For every rational polyhedron $P$ the series $\sigma(P; \vec x)$ is summable \cite[Theorem 3.1]{Barvinok1999AnAT}, which allows us to define the rational function associated to $P$.
\begin{definition}\label{def:varsigma}
    Denote by
    \[
        \varsigma := \Phi \circ \sigma: \cV \to \K(\vec x)
    \]
    the $\K$-linear map sending $\varphi \in \cV$ to the rational function
    represented by the series $\sigma(\varphi; \vec x)$. For a rational polyhedron $P$ we write
    $\varsigma(P; \vec x) := \varsigma(\mathbf{1}_P; \vec x)$. 
\end{definition}

If $P$ contains a line, then $\varsigma(P; \vec x)$ is $0$. Indeed, for a nonzero lattice
vector $\vec u$ in the lineality space of $P$ we have $P + \vec u = P$, so that
$(1 - \vec x^{\vec u})\,\sigma(P; \vec x) = 0$.
We say that $\varphi, \psi \in \cV$
agree \emph{modulo lineality}, and write $\varphi \equiv \psi$, if their difference is a linear
combination of indicator functions of polyhedra containing a line.
It follows that elements of $\cV$ that agree modulo lineality have the same image under
$\varsigma$.

Suppose $P$ is a rational polyhedron with $P \cap \Z^n \neq \emptyset$.
Then $\sigma(P; \vec x) \in \cF_{\pi}$ if and only if
$\rec(P) \cap \ker \pi = \{0\}$, where
$\rec(P)$ is the recession cone of $P$.
In this case the series
\[\sigma_\pi(P; \vec y) := \pi_* \sigma(P; \vec x)\]
is well defined and its coefficients are the fibre counts
$\#\left(P \cap \pi^{-1}(\vec b) \cap \Z^n\right)$ for $\vec b \in \Z^m$.
By \Cref{thm:phi-localizes}, $\sigma_\pi(P; \vec y)$ is summable.

The following lemma gives a denominator for $\varsigma(P; \vec x)$, and with it a sufficient
condition for $\varsigma(P; \vec x)$ to be pushed forward along the projection $\pi$.

\begin{lemma}\label{lemma:polyhedron-denominator}
    Let $P \subseteq \R^n$ be a pointed rational polyhedron and let $D$ be the set of primitive ray
    generators of the rays of $\rec(P)$. Then there exists a Laurent polynomial
    $p \in \K[\vec x^{\pm 1}]$ such that
    \[
        \varsigma(P; \vec x) = \frac{p(\vec x)}{\prod_{\vec u \in D} (1 - \vec x^{\vec u})}.
    \]
\end{lemma}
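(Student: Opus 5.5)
We will show that $q \cdot \sigma(P; \vec x)$ is a Laurent polynomial, where $q := \prod_{\vec u \in D}(1 - \vec x^{\vec u})$. Then \Cref{def:summable} gives $\varsigma(P;\vec x) = p/q$ with $p := q\,\sigma(P;\vec x)$. By the Minkowski--Weyl theorem, $P = Q + C$ with $Q$ a rational polytope and $C = \rec(P)$ a pointed rational cone generated by $D$.

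\emph{Reduction to simplicial pieces.} We first triangulate $C$ into simplicial cones using only the rays in $D$, so no new rays appear. A pointed cone admits such a triangulation. Then we subdivide $P$ accordingly. We write $\mathbf{1}_P$ as a signed combination of indicator functions of polyhedra of the form $Q_i + C_i$. Here the $Q_i$ are polytopes and each $C_i$ is a simplicial cone spanned by a subset $D_i \subseteq D$. Inclusion--exclusion over lower-dimensional intersections keeps all recession cones among faces of the $C_i$, so they are still spanned by subsets of $D$. A concrete way to do this is to decompose $P$ into polyhedral cells via a triangulation of the homogenization cone of $P$, using only its rays: the vertices of $P \times \{1\}$ and the vectors $(\vec u,0)$ for $\vec u\in D$. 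Each simplex of that triangulation is a simplicial cone whose dehomogenization is $\Conv(\text{some vertices}) + \Cone(D_i)$ with $D_i\subseteq D$ linearly independent. Since $\sigma$ is linear on convex chains, it suffices to treat each piece and its lower-dimensional faces.

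\emph{Simplicial case.} Let $R = \Conv(V) + \Cone(\vec u_1,\dots,\vec u_k)$ with the $\vec u_j \in D$ linearly independent. Consider the half-open parallelepiped $\Pi = \{\sum_j \lambda_j \vec u_j : 0 \le \lambda_j < 1\}$. Every point of $R$ is written uniquely as $\vec r + \sum_j m_j \vec u_j$, where $m_j \in \N$ and $\vec r \in B := \Conv(V) + \Pi$ is the unique such representative. To arrange this uniqueness cleanly, we take $V$ to be a single vertex in the triangulation, or we use a half-open decomposition. Then $\sigma(R;\vec x) = \sigma(B;\vec x)\prod_j (1-\vec x^{\vec u_j})^{-1}$ as Laurent series. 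Here $B$ is bounded, so $\sigma(B;\vec x)$ is a Laurent polynomial. Multiplying by the full $q$, the remaining factors $1-\vec x^{\vec u}$ for $\vec u \in D\setminus D_i$ are Laurent polynomials, so $q\,\sigma(R;\vec x)$ is a Laurent polynomial. Summing the pieces, $q\,\sigma(P;\vec x)$ is a Laurent polynomial $p$. These multiplications are legitimate on the level of Laurent series because all supports lie in $Q' + C$ for a polytope $Q'$, as in \Cref{rem:phi-localizes-generality}.

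\emph{Main obstacle.} The delicate step is the decomposition, specifically the uniqueness of $\vec r \in B$ when $\Conv(V)$ has several vertices. To avoid it, I would use a triangulation of the homogenized cone $\Cone(P\times\{1\})$ whose rays are the vertices $(\vec v,1)$ and the vectors $(\vec u,0)$, $\vec u \in D$. For a simplicial cone in the homogenized space with generators $\vec g_1,\dots,\vec g_{k'}$, the standard fundamental-parallelepiped argument gives $\sigma = \sigma(\Pi)\prod_j (1-\vec x^{\vec g_j})^{-1}$, with every lattice point of the cone written uniquely as a parallelepiped point plus an $\N$-combination of the generators. Restricting to height $1$ only the generators $(\vec u,0)$ can appear with arbitrary multiplicity. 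The generators $(\vec v,1)$ can contribute only at height $\le 1$. So the height-one slice has the form $\text{(Laurent polynomial)}/\prod_{\vec u\in D_i}(1-\vec x^{\vec u})$. Inclusion--exclusion over shared faces, or Brion's half-open trick, finishes the argument.
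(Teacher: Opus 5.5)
Your argument is correct, but it follows a different route from the paper.

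\textbf{The paper's route.} It first handles cones: triangulate without new rays and apply the simplicial-cone formula. It then uses Brion's theorem to write $\varsigma(P;\vec x)$ over the product of the factors $1-\vec x^{\vec u}$ for all edge directions $\vec u$ of $P$. Finally it removes each spurious factor with $\pm\vec u\notin D$ by a valuation argument. Every edge parallel to such a $\vec u$ is bounded, so cutting $P$ along a far-out hyperplane $\{\mu=c\}$ with $\mu(\vec u)\neq 0$ gives two bounded pieces and one unbounded piece with no edge parallel to $\vec u$. This yields $\ord_{1-\vec x^{\vec u}}\varsigma(P;\vec x)\ge 0$.

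\textbf{Your route.} You triangulate the homogenization $\Cone(\{(\vec v,1)\}\cup\{(\vec u,0):\vec u\in D\})$ without new rays and read off the height-one slice of each simplicial cell directly.
\begin{itemize}
\item This avoids Brion's theorem and valuations altogether.
\item It proves the stronger series-level statement that $q\,\sigma(P;\vec x)$ is itself a Laurent polynomial. That is exactly what \Cref{rem:phi-localizes-generality} needs, and the paper only reaches it from the lemma after an extra integral-domain argument.
\end{itemize}
The paper's version is shorter given the machinery it has already set up.

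\textbf{Two small points.}
\begin{itemize}
\item Your first ``simplicial case'' with $B=\Conv(V)+\Pi$ is genuinely false when $V$ has several vertices. For $V=\{0,\vec u_1\}$, the point $\vec u_1$ has two representations, so $\sigma(B;\vec x)/(1-\vec x^{\vec u_1})$ overcounts. It is therefore the homogenized argument that carries the proof, and you should drop the earlier version rather than keep it as an alternative.
\item For rational (non-lattice) vertices, the generators $(\vec v,1)$ must be replaced by integer multiples $(N\vec v,N)$ before applying the fundamental-parallelepiped decomposition. After that, your key observation still holds unchanged: only the height-zero generators $(\vec u,0)$ can occur with unbounded multiplicity at height $1$, because all other generators have positive height.
\end{itemize}
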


\begin{proof}
    For a cone, the claim follows by triangulating it without adding new rays and
    applying the formula for simplicial cones \cite[Example 3.3]{Barvinok1999AnAT}. By Brion's
    theorem, $\varsigma(P; \vec x)$ is a sum of terms coming from its vertex cones.
    By the cone case, $\varsigma(P; \vec x)$ can therefore be written over the product of the
    $1 - \vec x^{\vec u}$, with $\vec u$ running over the primitive edge directions of $P$ up to sign,
    as $1 - \vec x^{-\vec u} = -\vec x^{-\vec u}(1 - \vec x^{\vec u})$. It
    suffices to show that $\varsigma(P; \vec x)$ can be written with a denominator coprime
    to $1 - \vec x^{\vec u}$ for primitive $\vec u$ with $\pm \vec u \notin D$. Every edge of $P$ parallel to such a $\vec u$ is bounded.
    Choose a rational functional $\mu$ that is positive on $\rec(P) \setminus \{0\}$ with
    $\mu(\vec u) \neq 0$, and a $c$ so large that all these edges lie in $\{\mu < c\}$. Cutting along
    $H = \{\mu = c\}$ yields
    \[\varsigma(P; \vec x) = \varsigma(P \cap \{\mu \geq c\}; \vec x) + \varsigma(P \cap \{\mu \leq c\}; \vec x)
    - \varsigma(P \cap H; \vec x).\] 
    The polyhedra $P \cap \{\mu \leq c\}$ and $P \cap H$ are bounded, so their transforms are Laurent polynomials. No edge of
    $P \cap \{\mu \geq c\}$ is parallel to $\vec u$, so again by Brion's theorem and the cone case,
    we deduce that $\varsigma(P \cap \{\mu \geq c\}; \vec x)$ has no factor $1 - \vec x^{\vec u}$ in its denominator.
\end{proof}

Whenever $\varsigma(P; \vec x) \in \K[\vec x^{\pm 1}]_{\ker \pi_*}$, we set
\[
    \varsigma_\pi(P; \vec y) := \pi_*\left(\varsigma(P; \vec x)\right) \in \K(\vec y).
\]
By \Cref{lemma:polyhedron-denominator} this holds whenever $P$ is pointed and no ray of $\rec(P)$
is contracted by $\pi$.
This is defined in cases where $\sigma_\pi(P; \vec y)$ is not, see \Cref{ex:infinite-fibres} below.
Whenever both $\varsigma_\pi(P; \vec y)$ and $\sigma_\pi(P; \vec y)$ are defined, the two agree by \Cref{thm:phi-localizes}.

\begin{example}\label{ex:infinite-fibres}
    Let $n = 2$, $m = 1$ and $\pi(a_1, a_2) = a_1$, so that $\ker \pi_* = (1 - x_2)$, and let
    $C = \Cone((1, 1), (-1, 1))$. The fibre of $C$ over $b \in \Z$ is the ray
    $\{(b, a_2) : a_2 \geq |b|\}$, which contains infinitely many lattice points, so
    $\sigma_\pi(C; y)$ is not defined. On the other hand,
    \[
        \varsigma(C; \vec x) = \frac{1 + x_2}{(1 - x_1 x_2)(1 - x_1^{-1} x_2)},
    \]
    which has a well-defined pushforward
    \[
        \varsigma_\pi(C; y) = \frac{2}{(1 - y)(1 - y^{-1})}.
    \]

    One may interpret this rational function as follows. Let
    $C' := \Cone((1, 1), (1, -1)) \setminus \R_{\geq 0}(1, 1)$ be a half-open cone. The cones $C$ and $C'$ together make up the half-plane
    $\{a_1 + a_2 \geq 0\}$, so that
    $\mathbf{1}_C \equiv -\mathbf{1}_{C'}$,
    and $\varsigma(C; \vec x) = -\varsigma(C'; \vec x)$.
    Unlike $C$, the half-open cone $C'$ has finite fibres.
    Over $b \geq 1$ it contains the $2b$ lattice points $(b, a_2)$ with
    $-b \leq a_2 \leq b - 1$, and over $b \leq 0$ none. It follows that
    \[
        \varsigma_\pi(C'; y) = \Phi\Bigl(\sum_{b \geq 1} 2b\, y^b\Bigr) = \frac{2y}{(1 - y)^2},
    \]
    so that $\varsigma_\pi(C; y) = -\varsigma_\pi(C'; y)$, in agreement with the expression above
    (\Cref{fig:infinite-fibres}).

    \begin{figure}[ht]
    \centering
    \begin{tikzpicture}[scale=0.5, font=\small, line join=round, baseline=0]
        \foreach \i in {-3, ..., 3} \foreach \j in {-3, ..., 3} \fill[black!35] (\i, \j) circle (0.06);
        \fill[red!60!black!10] (0, 0) -- (-3.4, 3.4) -- (3.4, 3.4) -- cycle;
        \foreach \i in {-3, ..., 3} \draw[thin, dotted, black!45] (\i, 3.4) -- (\i, -4.6);
        \draw[semithick, red!40!black] (-3.4, 3.4) -- (0, 0) -- (3.4, 3.4);
        \foreach \i in {-3, ..., 3} \foreach \j in {-3, ..., 3} {
            \pgfmathtruncatemacro{\ai}{abs(\i)}
            \ifnum\j<\ai \else \fill (\i, \j) circle (0.09); \fi
        }
        \node[fill=red!60!black!10, inner sep=1pt] at (-0.5, 2.5) {$C$};
        \draw[->, semithick] (4.2, 1.4) -- node[right] {$\pi$} (4.2, -2.7);
        \draw[thin] (-3.6, -4.6) -- (3.6, -4.6);
        \foreach \i in {-3, ..., 3} {
            \fill (\i, -4.6) circle (0.09);
            \node[below=2pt] at (\i, -4.6) {$\infty$};
        }
        \node[right] at (3.6, -4.6) {$\R$};
    \end{tikzpicture}
    \hspace{2em}
    \begin{tikzpicture}[scale=0.5, font=\small, line join=round, baseline=0]
        \foreach \i in {-3, ..., 3} \foreach \j in {-3, ..., 3} \fill[black!35] (\i, \j) circle (0.06);
        \fill[red!60!black!10] (0, 0) -- (3.4, 3.4) -- (3.4, -3.4) -- cycle;
        \foreach \i in {-3, ..., 3} \draw[thin, dotted, black!45] (\i, 3.4) -- (\i, -4.6);
        \draw[semithick, red!40!black] (0, 0) -- (3.4, -3.4);
        \draw[semithick, dashed, red!40!black] (0, 0) -- (3.4, 3.4);
        \foreach \i in {1, ..., 3} \foreach \j in {-3, ..., 3} {
            \pgfmathtruncatemacro{\aj}{abs(\j)}
            \ifnum\aj<\i \fill (\i, \j) circle (0.09); \else
                \ifnum\j=-\i \fill (\i, \j) circle (0.09); \fi \fi
        }
        \node[fill=red!60!black!10, inner sep=1pt] at (2.5, -0.5) {$C'$};
        \draw[->, semithick] (4.2, 1.4) -- node[right] {$\pi$} (4.2, -2.7);
        \draw[thin] (-3.6, -4.6) -- (3.6, -4.6);
        \foreach \i/\n in {-3/0, -2/0, -1/0, 0/0, 1/2, 2/4, 3/6} {
            \fill (\i, -4.6) circle (0.09);
            \node[below=2pt] at (\i, -4.6) {$\n$};
        }
        \node[right] at (3.6, -4.6) {$\R$};
    \end{tikzpicture}
    \caption{The cone $C$ (left) and the half-open cone $C'$ (right) of \Cref{ex:infinite-fibres}.
    Every fibre of $\pi(a_1, a_2) = a_1$
    meets $C$ in infinitely many lattice points, but only in finitely many for $C'$.}
    \label{fig:infinite-fibres}
    \end{figure}
\end{example}

The map $\varsigma_\pi$ sends every polyhedron containing a line to $0$. The following example shows
that it can also vanish on pointed cones.

\begin{example}[Cones over circuits]\label{ex:circuit}
    If $\pi$ kills no ray of a pointed rational cone $C$, then $\varsigma_\pi(C; \vec y)$ is well defined
    (\Cref{lemma:polyhedron-denominator}), but it
    can vanish, as it does for suitable cones over circuits. Let $C \subseteq \R^n$ be a
    full-dimensional pointed rational cone with exactly $n + 1$ rays, with primitive generators
    $\vec u_0, \dots, \vec u_n$. They satisfy a linear relation $\sum_i \lambda_i \vec u_i = 0$,
    unique up to scaling.
    Grouping the rays by sign gives simplicial cones
    \[
        C_+ := \Cone(\vec u_i : \lambda_i \geq 0), \qquad C_- := \Cone(\vec u_i : \lambda_i < 0)
    \]
    with $C = C_+ + C_-$, and $C_+ \cap C_- = \R_{\geq 0} \vec w$, where
    $\vec w$ is the primitive lattice vector on the ray through
    $\sum_{\lambda_i > 0} \lambda_i \vec u_i = \sum_{\lambda_i < 0} (-\lambda_i) \vec u_i$. If
    $C \cap \Z^n = (C_+ \cap \Z^n) + (C_- \cap \Z^n)$, then
    \[
        \varsigma(C; \vec x) = (1 - \vec x^{\vec w})\, \varsigma(C_+; \vec x)\, \varsigma(C_-; \vec x)
    \]
    by the formula for free sums \cite[Corollary 2.5]{Bruns15}, see also
    \cite{BeckJayawantMcAllister13}. If moreover $\pi(\vec w) = 0$ and $\pi(\vec u_i) \neq 0$ for all $i$,
    then $\varsigma_\pi(C; \vec y) = 0$. Indeed, as $C_\pm$ is simplicial, $\varsigma(C_\pm; \vec x)$ can be
    written over the product of the factors $1 - \vec x^{\vec u_i}$ ranging over its rays
    \cite[Example 3.3]{Barvinok1999AnAT}, which avoids the prime $\ker \pi_*$, while
    $\pi_*(1 - \vec x^{\vec w}) = 0$.

    For instance, let
    \[
        C = \Cone((1, 0, 1), (-1, 0, 1), (0, 1, 1), (0, -1, 1))
    \]
    and $\pi$ the projection onto the first two coordinates. The
    relation between the primitive ray generators of $C$ is
    \[
        (1, 0, 1) + (-1, 0, 1) = (0, 1, 1) + (0, -1, 1),
    \]
    so $C_+ = \Cone((1, 0, 1), (-1, 0, 1))$, $C_- = \Cone((0, 1, 1), (0, -1, 1))$ and
    $\vec w = (0, 0, 1) \in \ker \pi$ (see \Cref{fig:circuit}). A lattice point $\vec a$ of $C$ has $a_3 \geq |a_1| + |a_2|$, and so may be written as the sum of $(a_1, 0, |a_1|) \in C_+$ and
    $(0, a_2, a_3 - |a_1|) \in C_-$. It follows that $C \cap \Z^3 = (C_+ \cap \Z^3) + (C_- \cap \Z^3)$, so
    \[
        \varsigma(C; \vec x) = (1 - x_3)\frac{1 + x_3}{(1 - x_1x_3)(1 - x_1^{-1}x_3)}
        \frac{1 + x_3}{(1 - x_2x_3)(1 - x_2^{-1}x_3)}
    \]
    and $\varsigma_\pi(C; \vec y) = 0$.
    The series
    $\sigma_\pi(C; \vec y)$ is not defined, as every fibre of $\pi$ meets $C$ in a half-line.

    \begin{figure}[ht]
    \centering
    \begin{tikzpicture}[scale=4.6, line join=round, font=\small]
        \coordinate (O)   at (0, -0.433);
        \coordinate (g1)  at (-0.391, -0.046);  
        \coordinate (g2)  at (0.485, -0.016);   
        \coordinate (g3)  at (0.303, 0.178);    
        \coordinate (g4)  at (-0.397, 0.158);   
        \coordinate (r1)  at (-0.622, 0.182);
        \coordinate (r2)  at (0.758, 0.219);
        \coordinate (r3)  at (0.425, 0.424);
        \coordinate (r4)  at (-0.563, 0.406);
        \coordinate (Z)   at (0, 0.613);
        \coordinate (w)   at (0, 0.3255);
        \fill[pattern=north west lines, pattern color=red!75!gray] (O) -- (r4) -- (w) -- cycle;
        \draw[semithick, black!75] (O) -- (r4);
        \fill[black!0] (O) -- (r1) -- (w) -- cycle;
        \fill[pattern=north east lines, pattern color=blue!50!gray] (O) -- (r1) -- (r3) -- cycle;
        \draw[semithick, black!75] (O) -- (r3);
        \fill[black!0] (O) -- (r2) -- (w) -- cycle;
        \fill[pattern=north west lines, pattern color=red!75!gray] (O) -- (r2) -- (w) -- cycle;
        \draw[semithick, black!75] (r1) -- (r2);
        \draw[semithick, black!75] (r4) -- (r2) -- (r3) -- (r4) -- (r1) -- (r3);
        \draw[thick, black!75] (O) -- (r1);
        \draw[thick, black!75] (O) -- (r2);
        \draw[semithick, dashed, black!75] (O) -- (r3);
        \draw[semithick, dashed, black!75] (O) -- (r4);
        \draw[->, line width=1pt] (O) -- (Z);
        \fill (O) circle (0.012);
        \fill (r1) circle (0.0155);
        \fill (r2) circle (0.0155);
        \fill (r3) circle (0.012);
        \fill (r4) circle (0.012);
        \node[left, inner sep=5pt] at (r1) {$(1, 0, 1)$};
        \node[right, inner sep=5pt] at (r2) {$(0, 1, 1)$};
        \node[above right, inner sep=3pt] at (r3) {$(-1, 0, 1)$};
        \node[above left, inner sep=3pt] at (r4) {$(0, -1, 1)$};
        \node[above] at (Z) {$C_+ \cap C_- = \R_{\geq 0}\vec w \subseteq \ker \pi$};
        \node[blue!40!black] at (-0.27, -0.28) {$C_+$};
        \node[red!40!black] at (0.32, -0.28) {$C_-$};
    \end{tikzpicture}
    \caption{The cone over a circuit of \Cref{ex:circuit}, the two simplicial cones $C_+$ and $C_-$ into which
    the relation splits it, and their common ray, which $\pi$ kills.}
    \label{fig:circuit}
    \end{figure}
\end{example}

\section{Projected Brion's theorem}\label{sec:brion}

Brion's theorem states that for any pointed rational polyhedron $P$, 
we have
\[
    \varsigma(P; \vec x) = \sum_{v \textrm{ vertex of } P} \varsigma(K_v; \vec x),
\]
where $K_v := v + \R_{\geq 0}(P - v)$ are the vertex cones of $P$.
Brion proved this for lattice polytopes \cite{Brion88}, and the form above is
\cite[Theorem 3.5]{Barvinok1999AnAT}.

The summands cannot be pushed forward one at a time, as $\varsigma(K_v; \vec x)$ need not lie in
$\K[\vec x^{\pm 1}]_{\ker \pi_*}$, so they have to be grouped. The next example shows how.

\begin{example}\label{ex:pyramid-poles}
    Take the pyramid $P = \Conv\{\pm \vec e_1, \vec e_2, \pm \vec e_3\}$ depicted in \Cref{fig:pyramid}.
    Let $\pi(\vec a) = a_1$, so that
    $\ker \pi_* = (1 - x_2, 1 - x_3)$.
    The rational function $\varsigma(K_{\vec e_2}; \vec x)$ has denominator
    \[
            (1 - x_1x_2^{-1})(1 - x_1^{-1}x_2^{-1})(1 - x_3x_2^{-1})(1 - x_3^{-1}x_2^{-1}).
    \]
    The factors $1 - x_3x_2^{-1}$ and $1 - x_3^{-1}x_2^{-1}$ both lie in $\ker \pi_*$, so 
    $\varsigma(K_{\vec e_2}; \vec x)$ cannot be pushed forward along $\pi$.
    These factors come from edges whose direction is killed by $\pi$, which suggests adding the rational functions of the vertex cones at vertices joined by such an edge. Indeed, a quick computation shows that adding 
    $\varsigma(K_{\vec e_3}; \vec x)$ to $\varsigma(K_{\vec e_2}; \vec x)$ cancels the factor $1 - x_3x_2^{-1}$ of the denominator, which corresponds to the edge $[\vec e_2, \vec e_3]$. The other problematic factor
    then disappears after adding $\varsigma(K_{-\vec e_3}; \vec x)$.

    \begin{figure}[ht]
    \centering
    \begin{tikzpicture}[scale=1.8, font=\small, line join=round]
        \coordinate (A) at ( 0.000,  0.942);
        \coordinate (P) at (-0.357, -0.254);
        \coordinate (M) at ( 0.298,  0.212);
        \coordinate (Y) at ( 0.923, -0.087);
        \coordinate (B) at (-0.864,  0.081);
        \draw[thin, black!40] (0.35, 0.265) -- (0.6, 0.55);
        \fill[very nearly transparent, red!60!black] (P) -- (A) -- (M) -- cycle;
        \draw[thin, dashed, black!70] (M) -- (Y) (M) -- (B);
        \draw[line width=1.4pt, dashed, red!40!black] (A) -- (M);
        \draw[thick] (P) -- (Y) (P) -- (B) (A) -- (Y) (A) -- (B);
        \draw[line width=1.6pt, red!40!black] (A) -- (P);
        \fill (Y) circle (0.04);
        \fill (B) circle (0.038);
        \fill[red!40!black] (A) circle (0.04);
        \fill[red!40!black] (P) circle (0.043);
        \fill[red!40!black] (M) circle (0.036);
        \node[above=2pt] at (A) {$\vec e_2$};
        \node[below=2pt] at (P) {$\vec e_3$};
        \node[right=2pt] at (Y) {$\vec e_1$};
        \node[left=2pt] at (B) {$-\vec e_1$};
        \node[anchor=south west, inner sep=1pt] at (0.5, 0.55) {$-\vec e_3$};
    \end{tikzpicture}
    \caption{The pyramid of \Cref{ex:pyramid-poles}, with the two edges contracted by
    $\pi$ and the shaded triangle spanned by the contracted class $\{\vec e_2, \pm \vec e_3\}$, which is
    not a face.}
    \label{fig:pyramid}
    \end{figure}
\end{example}

The above example motivates the grouping of vertices into so-called \emph{contracted classes}.

\begin{definition}\label{def:contracted-classes}
    Let $P \subseteq \R^n$ be a rational polyhedron with vertex set $V(P)$.
    A face $F$ of $P$ is \emph{contracted} by $\pi$ if $\pi$ kills its direction space, i.e., if
    $\pi(F - F) = 0$.
    Let $\sim$ be the equivalence relation on $V(P)$ generated by $v_1 \sim v_2$ whenever
    $[v_1, v_2]$ is an edge of $P$ contracted by $\pi$. Its equivalence classes are the
    \emph{contracted classes} of $P$.
\end{definition}

The endpoints of a contracted edge have the same image under $\pi$, so each contracted class lies in
a single fibre of $\pi$. When $P$ is simple and no unbounded edge of $P$ is contracted, every contracted class of $P$ is
the vertex set of a bounded face of $P$. In general this fails, as the pyramid shows.

\begin{example}\label{ex:pyramid-classes}
   In \Cref{ex:pyramid-poles}, the only contracted edges are those joining $\vec e_2$ with $\pm \vec e_3$, so the contracted classes are the singletons $\{\vec e_1\}$ and $\{-\vec e_1\}$ and the class $S := \{\vec e_3, \vec e_2, -\vec e_3\}$. The triangle spanned by $S$ is not a face of $P$ (see \Cref{fig:pyramid}). 
\end{example}

Our main result is that the rational functions of the vertex cones of a contracted class sum to an element of
$\K[\vec x^{\pm 1}]_{\ker \pi_*}$, so that $\pi_*$ may be applied class by class. This is the
projected form of Brion's theorem. 

\begin{theorem}[Projected Brion's theorem]\label{thm:projected-brion}
    Let $P \subseteq \R^n$ be a pointed rational polyhedron such that no unbounded edge of $P$ is
    contracted by $\pi$. Then for every contracted class $S$ of $P$
    (\Cref{def:contracted-classes}),
    \[
        \sum_{v\in S} \varsigma(K_v; \vec x) \in \K[\vec x^{\pm 1}]_{\ker \pi_*},
    \]
    so that $\pi_*$ may be applied to it, and
    \[
        \varsigma_\pi(P; \vec y) = \sum_{S \in V(P)/\sim} \pi_* \left(
            \sum_{v \in S} \varsigma(K_v; \vec x)
        \right) \in \K(\vec y),
    \]
    the outer sum being finite as $V(P)$ is. If moreover $\rec(P) \cap \ker \pi = \{0\}$, this is
    the rational function of the series $\sigma_\pi(P; \vec y)$, whose coefficients are the fibre
    counts of $P$.
\end{theorem}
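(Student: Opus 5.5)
The main claim is that for each contracted class $S$ the sum $\varsigma_S := \sum_{v\in S}\varsigma(K_v;\vec x)$ lies in $\K[\vec x^{\pm 1}]_{\ker\pi_*}$. Once this holds, the displayed formula is immediate. Brion's theorem gives $\varsigma(P;\vec x)=\sum_S \varsigma_S$. Both sides lie in the localization $\K[\vec x^{\pm1}]_{\ker\pi_*}$, which is a ring, and $\pi_*$ is a ring homomorphism on it, hence additive. The last sentence is then \Cref{thm:phi-localizes} applied to $f=\sigma(P;\vec x)$, which lies in $\cS_\pi$ when $\rec(P)\cap\ker\pi=\{0\}$, combined with $\varsigma_\pi=\pi_*\varsigma$.

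To prove the main claim, I would realize $\varsigma_S$ as $\varsigma$ of a single polyhedron modulo lineality. Pick a rational functional $\ell$ on $\R^n$ that is constant on $\pi^{-1}(\pi(S))$, i.e. $\ell=\mu\circ\pi$, and generic in the sense that it is nonconstant on every edge not contracted by $\pi$. Such an $\ell$ exists because a non-contracted edge direction $\vec u$ has $\pi(\vec u)\neq0$, so a generic $\mu$ avoids the finitely many hyperplanes $\mu(\pi(\vec u))=0$. Let $c:=\ell(S)$, which is well defined since each contracted class lies in one fibre of $\pi$. Perturb $c$ slightly to $c\pm\epsilon$ so that the hyperplane separates the vertices with $\ell\le c$ from those with $\ell>c$. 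Cutting $P$ by $\{\ell\le c\}$ and $\{\ell\le c-\epsilon\}$ and taking the difference isolates the vertex sets at level exactly $c$, up to new vertices created on the cut hyperplanes.

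A cleaner route, which I would follow, localizes face by face. Let $F$ be the smallest face of $P$ containing $S$, and let $G$ be the face of $P$ maximizing $\ell$ on the "local" polyhedron $K_S := \bigcap_{v\in S}K_v$ near $S$. Instead, I would use the following Brion-type identity: the vertices of $P$ on which $\ell$ attains value $c$ within a slab form the vertex set of $P\cap\{c-\epsilon\le\ell\le c+\epsilon\}$ minus the new vertices. The new vertices lie on edges that are not contracted, and their cones have denominators avoiding $\ker\pi_*$ after projection. That slab, however, meets all classes in the same fibre level, not only $S$. To separate classes I would rather use an inductive edge-gluing argument directly on denominators.

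Concretely, a factor $1-\vec x^{\vec u}$ with $\pi(\vec u)=0$ in the denominator of $\varsigma(K_v)$ comes only from an edge at $v$ in direction $\pm\vec u$, by \Cref{lemma:polyhedron-denominator} applied to $K_v$. Such an edge is bounded by hypothesis, hence contracted with its other endpoint in $S$. For a prime factor $h$ of $1-\vec x^{\vec u}$ lying in $\ker\pi_*$, I would show that $\varsigma_S$ has no pole along $h$. Consider the union $E$ of the contracted edges of $P$ parallel to $\vec u$ with endpoints in $S$. Replace $P$ by $P_{\vec u} := P+\R\vec u$ locally, meaning the polyhedron $Q := \bigcap_{v\in S}(K_v + \R\vec u)$ computed near each edge. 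For an edge $[v,v']$ in direction $\vec u$, the tangent cone of $P$ along the edge, $T_e := v+\R_{\ge0}(P-v)+\R\vec u$, contains a line. The Brion-type decomposition of $T_e$ restricted to $K_v$ and $K_{v'}$ then gives $\mathbf 1_{K_v}+\mathbf 1_{K_{v'}}\equiv(\text{cones without } \vec u\text{-edges})$ modulo lineality, via the standard polarization (Lawrence–Varchenko) argument in which $\vec u$ is flipped. Summing over all edges parallel to $\vec u$ in $S$ makes the $h$-pole cancel. Iterating over all such primes yields the claim. This step is the main obstacle. The difficulty is making the edge-by-edge cancellation rigorous when $S$ is not a face, as for the pyramid. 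I would handle it by the polarization trick applied to each $K_v$ with respect to $\vec u$: write $\mathbf 1_{K_v}$ modulo lineality as a signed sum of cones with no ray in direction $+\vec u$, and check that the leftover terms supported along the $\vec u$-edges telescope across $S$, because each contracted edge contributes opposite half-lines at its two endpoints.
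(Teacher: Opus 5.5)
Your reduction of the displayed formula and of the final sentence to the membership $\sum_{v\in S}\varsigma(K_v;\vec x)\in\K[\vec x^{\pm1}]_{\ker\pi_*}$ is correct. The skeleton you eventually settle on is also the paper's: fix a primitive $\vec u$ with $\pi(\vec u)=0$. Vertices of $S$ without an edge parallel to $\vec u$ have no pole along $1-\vec x^{\vec u}$ by \Cref{lemma:polyhedron-denominator}. The remaining vertices are paired by bounded contracted edges, so it suffices that each sum $\varsigma(K_v;\vec x)+\varsigma(K_{v'};\vec x)$ has no such pole, which is \Cref{lemma:edge-cancellation}. The level-set, slab and $Q=\bigcap_{v\in S}(K_v+\R\vec u)$ detours play no role.

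The gap is that this edge-level cancellation, which carries all the content, is only gestured at. Flipping $\vec u$ at $v$ merely trades the ray $\vec u$ for $-\vec u$, and $1-\vec x^{-\vec u}$ is associate to $1-\vec x^{\vec u}$, so the pole survives. After combining with $\mathbf 1_{K_{v'}}$, the opposite half-lines do not cancel to zero either. They leave a nonzero chain that is merely bounded on each line parallel to $\vec u$, and you never explain why such a chain has no pole along $1-\vec x^{\vec u}$. Relatedly, what such an argument produces are polyhedra whose recession cones contain no ray parallel to $\vec u$, not cones without $\vec u$-edges; for instance $K_v\cap K_{v'}$ has the bounded edge $[v,v']$. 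Finally, nothing has to telescope, and it is irrelevant that $S$ need not be a face. A vertex lies on at most one edge parallel to $\vec u$, so for fixed $\vec u$ these edges split the relevant vertices of $S$ into disjoint pairs, and distinct contracted directions are treated independently.

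Your half-line picture does close the gap once the right tool is added. Let $T_e=K_v+\R\vec u=K_{v'}+\R\vec u$. Each line parallel to $\vec u$ through a point of $T_e$ meets $K_v$ in a closed half-line unbounded in direction $\vec u$, and meets $K_{v'}$ in one unbounded in direction $-\vec u$. Hence the chain $\mathbf 1_{K_v}+\mathbf 1_{K_{v'}}-\mathbf 1_{T_e}=\mathbf 1_{K_v\cap K_{v'}}-\mathbf 1_{T_e\setminus(K_v\cup K_{v'})}$ meets every such line in a bounded set. Its transform is therefore summable with finite fibres for the quotient $\rho\colon\Z^n\to\Z^n/\Z\vec u$. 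Its rational function equals $\varsigma(K_v;\vec x)+\varsigma(K_{v'};\vec x)$, since $T_e$ contains a line. By \Cref{thm:phi-localizes} this function lies in the localization at $\ker\rho_*=(1-\vec x^{\vec u})$, which is exactly the required bound $\ord_{1-\vec x^{\vec u}}\geq 0$.

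This is a close variant of the paper's proof of \Cref{lemma:edge-cancellation}. The paper instead pushes $(1-\vec x^{\vec u})\varsigma(K_{v_i};\vec x)$ forward along $\rho$ and finds the two images to be $\pm\Phi(f)$, with $f$ the indicator series of $\rho(T_e\cap\Z^n)$. If you want to avoid \Cref{thm:phi-localizes}, you would have to cut the bounded-fibre region into pointed polyhedra over cells of $T_e/\R\vec u$ on which both boundary functions are affine, and then apply \Cref{lemma:polyhedron-denominator} piece by piece. That works but requires noticeably more bookkeeping.
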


\begin{example}\label{ex:pyramid}
    The lattice points of the pyramid $P$ of \Cref{ex:pyramid-poles} are its five vertices and the origin, so that the fibre counts are
    $1, 4, 1$. Taking the classes in the order $\{-\vec e_1\}$, $S$, $\{\vec e_1\}$,
    \Cref{thm:projected-brion} reads
    \[
        \varsigma_\pi(P; y)
        = \frac{1 + y}{y\,(1 - y)^3}
          - \frac{8\,y}{(1 - y)^2}
          - \frac{y^3(1 + y)}{(1 - y)^3}
        = y^{-1} + 4 + y.
    \]
\end{example}

As the following example shows, the vertices in the same fibre of $\pi$ might not belong to the same contracted class and hence do not have to be grouped before applying $\pi_*$.

\begin{example}\label{ex:cross-polytope}
    Let $n = 2$, $m = 1$ and
    $\pi(a_1, a_2) = a_1$, so that $\ker \pi_* = (1 - x_2)$, and let
    $P = \Conv\{\pm \vec e_1, \pm \vec e_2\}$ be the cross-polytope. Its
    edges have directions $\pm \vec e_1 \pm \vec e_2$, so none is contracted and every vertex is a
    contracted class of its own. Even though $\pm \vec e_2$ lie in one fibre of $\pi$, there is no reason to join them. Indeed,
    \[
        \varsigma(K_{\vec e_2}; \vec x) = \frac{1 + x_2}{(1 - x_1 x_2^{-1})(1 - x_1^{-1} x_2^{-1})} ,
        \qquad
        \varsigma(K_{-\vec e_2}; \vec x) = \frac{1 + x_2^{-1}}{(1 - x_1 x_2)(1 - x_1^{-1} x_2)}
    \]
    lie in $\K[\vec x^{\pm 1}]_{\ker \pi_*}$ already, both with image $-2y/(1 - y)^2$.
\end{example}

Before proving \Cref{thm:projected-brion}, we show in \Cref{lemma:edge-cancellation} that the
denominators of the terms $\varsigma(K_v; \vec x)$ cancel along edges.
This is measured by orders of vanishing.
For any primitive vector $\vec u \in \Z^n$, the polynomial $1 - \vec x^{\vec u}$ is irreducible,
because a change of basis carries $\vec u$ to $\vec e_1$ and $1 - x_1$ is irreducible.
Since $\K[\vec x^{\pm 1}]$ is a UFD, this determines a valuation
\[
    \ord_{1 - \vec x^{\vec u}}: \K[\vec x^{\pm 1}] \to \N \cup \{\infty\},
\]
with $\ord_{1 - \vec x^{\vec u}}(0) = \infty$, and this extends to a
valuation $\ord_{1 - \vec x^{\vec u}}: \K(\vec x) \to \Z \cup \{\infty\}$. 
Note that $\ord_{1 - \vec x^{\vec u}}(f) \geq 0$ says precisely that $f$ can be written with a denominator
coprime to $1 - \vec x^{\vec u}$. Finally,
$1 - \vec x^{-\vec u} = -\vec x^{-\vec u}(1 - \vec x^{\vec u})$, so $1 - \vec x^{\vec u}$ and
$1 - \vec x^{-\vec u}$ are associates and $\ord_{1 - \vec x^{\vec u}}$ only depends on $\vec u$ up
to sign.

\begin{lemma}\label{lemma:edge-cancellation}
    Let $P \subseteq \R^n$ be a rational polyhedron and let $v_1, v_2$ be two vertices spanning an
    edge $e$ of $P$, with primitive direction $\vec u$. Then
    \[
        \ord_{1 - \vec x^{\vec u}}
        \left(\varsigma(K_{v_1}; \vec x) + \varsigma(K_{v_2}; \vec x)\right) \geq 0.
    \]
\end{lemma}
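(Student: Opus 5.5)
We reduce to a polyhedron in which $v_1$ and $v_2$ are the only vertices, apply Brion's theorem there, and then use \Cref{lemma:polyhedron-denominator}. Write $\vec u$ for the primitive direction of $e$, oriented from $v_1$ to $v_2$.

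\emph{Step 1: reduction.} Let $F$ be the tangent cone of $P$ along the edge $e$, namely the intersection of all closed halfspaces defining facets of $P$ that contain $e$. Equivalently, $F = e + \R_{\geq 0}(P - e)$. Then $K_{v_1}$ and $K_{v_2}$ are also the vertex cones of $F$ at $v_1$ and $v_2$. Suppose $F$ is pointed. Its vertices are exactly $v_1$ and $v_2$: any vertex of $F$ lies on $e$, since every facet inequality through $e$ is tight there. By Brion's theorem,
\[
\varsigma(F;\vec x)=\varsigma(K_{v_1};\vec x)+\varsigma(K_{v_2};\vec x).
\]
So it suffices to show $\ord_{1-\vec x^{\vec u}}\varsigma(F;\vec x)\ge 0$.

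If $F$ is not pointed, I cut it by auxiliary hyperplanes, or use a generic translate argument, to make it pointed. Alternatively I avoid the issue by working with $P$ intersected with a large box, which does not change $K_{v_1}$ or $K_{v_2}$. Here is the cleanest version. Replace $P$ by $P' = \bigcap$ of the facet halfspaces containing $e$, intersected with halfspaces $\{\ell_j \le c_j\}$ chosen so that $P'$ is a pointed polyhedron with $e$ still an edge and $v_1,v_2$ unchanged vertex cones. The other vertices of $P'$ are then far away, and we must check that their vertex cones contribute no pole along $1-\vec x^{\vec u}$.

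\emph{Step 2: denominator.} \Cref{lemma:polyhedron-denominator} writes $\varsigma(F;\vec x)$ over $\prod_{\vec w\in D}(1-\vec x^{\vec w})$, where $D$ is the set of primitive ray generators of $\rec(F)$. The edge $e$ is bounded. The key point is that no ray of $\rec(F)$ is parallel to $\pm\vec u$. Indeed, $\rec(F) = \R_{\geq 0}(P-e)$ modulo the line, and if $\pm\vec u\in\rec(F)$ were a ray, then walking from $v_2$ in direction $\vec u$ would stay in $F$, contradicting that $v_2$ is a vertex of $F$; the same argument applies at $v_1$ in direction $-\vec u$. Since distinct primitive vectors $\vec w\ne\pm\vec u$ give non-associate irreducibles $1-\vec x^{\vec w}$, the denominator is coprime to $1-\vec x^{\vec u}$, and the order is $\ge0$.

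\emph{Main obstacle.} The delicate point is pointedness of $F$. If $P$ has a nontrivial lineality relative to $e$, i.e.\ $F$ contains a line, Brion's theorem does not apply directly. The plan is to intersect $F$ with a halfspace $\{\mu\le c\}$, where $\mu$ is a rational functional with $\mu(\vec u)=0$ that is nonconstant on the lineality space, or more generally to take a finite intersection making the result pointed while keeping $e$ as an edge with the same vertex cones. The new vertices arise only from the added hyperplanes. I must then check that their vertex cones contribute no factor $1-\vec x^{\vec u}$. This is where I expect difficulty: a new vertex may have an edge parallel to $\vec u$.

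To handle it, I would first try the cutting trick from the proof of \Cref{lemma:polyhedron-denominator}. Choose $\mu$ with $\mu(\vec u)\neq 0$, positive on the recession directions, and use inclusion--exclusion
\[
F=(F\cap\{\mu\ge c\})+(F\cap\{\mu\le c\})-(F\cap H).
\]
The bounded pieces have polynomial transforms, and the far piece has no edge parallel to $\vec u$, since all $\vec u$-edges are near $e$. Combined with the fact that polyhedra containing lines have $\varsigma=0$, this yields the claim.
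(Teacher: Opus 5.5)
The reduction in Step 1 fails, because the polyhedron $F$ you define always contains the line $v_1 + \R\vec u$. The hyperplane of every facet containing $e$ contains this line; equivalently, $\pm(v_2 - v_1) \in P - e$, so $\R\vec u \subseteq \R_{\geq 0}(P - e)$. Hence the case ``$F$ pointed'' never occurs, and $F$ has no vertices at all. Moreover $\rec(F) \supseteq \R\vec u$, so the claim of Step 2 is false and \Cref{lemma:polyhedron-denominator} does not apply. Finally $\varsigma(F; \vec x) = 0$.

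In particular the identity $\varsigma(F) = \varsigma(K_{v_1}) + \varsigma(K_{v_2})$ is wrong. For $P = [0,1]^2$ and $e = [0,1] \times \{0\}$ one gets $F = \{a_2 \geq 0\}$, whereas $\varsigma(K_{v_1}; \vec x) + \varsigma(K_{v_2}; \vec x) = (1 + x_1)/(1 - x_2) \neq 0$.

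Your ``main obstacle'' paragraph treats as exceptional what is always the case, and the repairs sketched there do not go through:
\begin{enumerate}
\item a cut $\{\mu \leq c\}$ with $\mu(\vec u) = 0$ leaves $\R\vec u$ in the lineality space;
\item no $\mu$ is positive on $\rec(F) \setminus \{0\}$ when $\rec(F)$ contains a line, so $F \cap \{\mu \leq c\}$ is not bounded;
\item nothing relates the resulting pieces back to $\varsigma(K_{v_1}) + \varsigma(K_{v_2})$.
\end{enumerate}
The real difficulty is controlling the poles along $1 - \vec x^{\vec u}$ at the extra vertices of whatever pointed polyhedron you use. You name this difficulty but do not resolve it.

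Your idea can be made to work with $Q := K_{v_1} \cap K_{v_2}$, the intersection of the facet halfspaces through $v_1$ \emph{or} $v_2$. This $Q$ is pointed, and its vertex cones at $v_1, v_2$ are $K_{v_1}, K_{v_2}$. Also $\pm\vec u \notin \rec(Q) = \rec(K_{v_1}) \cap \rec(K_{v_2})$, because $\rec(K_{v_1})$ is pointed and contains $\vec u$, while $\rec(K_{v_2})$ is pointed and contains $-\vec u$.

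$Q$ can have further vertices. The missing observation is that a defining hyperplane of $Q$ parallel to $\vec u$ passes through $v_1$ or $v_2$, and therefore contains the whole line $L = v_1 + \R\vec u$. An edge of $Q$ parallel to $\vec u$ has affine hull cut out by such hyperplanes, so it lies on $L$; since $Q \cap L = e$, the edge is $e$. Brion's theorem for $Q$, together with \Cref{lemma:polyhedron-denominator} for $Q$ and for its remaining vertex cones, then gives the lemma. This route uses only Brion's theorem and the denominator lemma.

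The paper argues differently. It multiplies by $1 - \vec x^{\vec u}$ and observes that every line parallel to $\vec u$ through a lattice point of $K_{v_1} + \R\vec u$ meets $K_{v_1} \setminus (K_{v_1} + \vec u)$ in exactly one lattice point. Hence the two products push forward along $\rho\colon \Z^n \to \Z^n/\Z\vec u$ to $\pm$ the same series. It then concludes via \Cref{thm:phi-localizes} that their sum lies in $\ker\rho_* = (1 - \vec x^{\vec u})$.
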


\begin{proof}
    We may assume that  $\vec u \in \rec(K_{v_1})$ and $-\vec u \in \rec(K_{v_2})$.
    Let $T_e := K_{v_1} + \R\vec u = K_{v_2} + \R\vec u$ be the tangent cone of $P$ along $e$, and let
    $\rho \colon \Z^n \to \Z^n/\Z\vec u$ be the quotient map, so that $\ker \rho_*$ is generated by
    $1 - \vec x^{\vec u}$. Every line parallel to $\vec u$ through a lattice point of $T_e$ meets
    $K_{v_1} \setminus (K_{v_1} + \vec u)$ in exactly one lattice point, and every lattice point of $K_{v_1}$ lies on such a line.
    Hence $(1 - \vec x^{\vec u})\,\sigma(K_{v_1}; \vec x)$ lies in $\cF_\rho$, and its pushforward is
    $f := \sum_{\vec b \in \rho(T_e \cap \Z^n)} \vec y^{\vec b}$.
    By \Cref{thm:phi-localizes},
    $(1 - \vec x^{\vec u})\,\varsigma(K_{v_1}; \vec x)$ lies in $\K[\vec x^{\pm 1}]_{\ker \rho_*}$ with
    pushforward $\Phi(f)$.
    The same argument shows that
    $(1 - \vec x^{\vec u})\,\varsigma(K_{v_2}; \vec x)$ has pushforward $-\Phi(f)$,
    so that the function
    \[(1 - \vec x^{\vec u})\left(\varsigma(K_{v_1}; \vec x) + \varsigma(K_{v_2}; \vec x)\right)\]
    lies in the kernel of $\rho_*$, which is generated by $1 - \vec x^{\vec u}$.
    Dividing by $1 - \vec x^{\vec u}$ gives the claim.
\end{proof}

Summed over a contracted class, these cancellations prove \Cref{thm:projected-brion}.

\begin{proof}[Proof of \Cref{thm:projected-brion}]
    For a contracted class $S$, let
    \[
        \psi_S(\vec x) := \sum_{v \in S} \varsigma(K_v; \vec x).
    \]
    It suffices to show that $\psi_S \in \K[\vec x^{\pm 1}]_{\ker \pi_*}$, the rest following from Brion's theorem, linearity of the pushforward $\pi_*$ and \Cref{thm:phi-localizes}.

    Let $E_S$ hold one primitive direction vector for each edge direction occurring at a vertex of $S$, taken up to sign, so that the
    $1 - \vec x^{\vec u}$ for $\vec u \in E_S$ are pairwise non-associate irreducibles. Then by
    \Cref{lemma:polyhedron-denominator}, applied to the vertex cones $K_v$, there exists some Laurent polynomial $p$ such that
    \[
        \psi_S(\vec x) = \frac{p(\vec x)}{\prod_{\vec u \in E_S} (1 - \vec x ^{\vec u})}.
    \]
    Let $\vec u \in E_S$ be a direction with $\pi(\vec u) = 0$, so that
    $1 - \vec x^{\vec u} \in \ker \pi_*$, and let $S_{\vec u} \subseteq S$ be the set of vertices of
    $S$ lying on an edge of $P$ parallel to $\vec u$. Such an edge is bounded, as no unbounded edge
    of $P$ is contracted, and its other endpoint is again a vertex in $S_{\vec u}$.
    Moreover, a vertex lies on at most one edge parallel to
    $\vec u$. These edges therefore pair up the vertices of $S_{\vec u}$, and we may apply
    \Cref{lemma:edge-cancellation} to each pair. For $v \in S \setminus S_{\vec u}$
    no ray of $K_v$ is parallel to $\vec u$, so
    $\ord_{1 - \vec x^{\vec u}}(\varsigma(K_v; \vec x)) \geq 0$ by \Cref{lemma:polyhedron-denominator}.
    We deduce that
    \[
        \ord_{1 - \vec x^{\vec u}} \psi_S \geq 0 .
    \]
    It follows that $\psi_S$ may be
    written over the factors $1 - \vec x^{\vec u}$ with $\pi(\vec u) \neq 0$, each of which avoids the prime ideal
    $\ker \pi_*$.
\end{proof}

In fact, the grouping in \Cref{thm:projected-brion} is minimal, at least for lattice polyhedra.
That is, every set of summands in Brion's formula whose sum lies in
$\K[\vec x^{\pm 1}]_{\ker \pi_*}$ is a union of contracted classes.

\begin{proposition}\label{prop:minimality}
    Let $P \subseteq \R^n$ be a rational polyhedron such that every contracted edge of $P$ lies on a
    line through a lattice point (for instance a lattice polyhedron). 
    If a set $S$ of vertices of $P$
    satisfies $\psi_S := \sum_{v \in S} \varsigma(K_v; \vec x) \in \K[\vec x^{\pm 1}]_{\ker \pi_*}$,
    then $S$ is a union of
    contracted classes.
\end{proposition}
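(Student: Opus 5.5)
The plan is to argue by contraposition. Suppose $S$ is not a union of contracted classes. Then some contracted edge $e=[v_1,v_2]$ of $P$, with primitive direction $\vec u$ and $\pi(\vec u)=0$, has $v_1\in S$ and $v_2\notin S$. I will show that $\ord_{1-\vec x^{\vec u}}\psi_S<0$. This suffices. Suppose instead that $\psi_S=p/q$ with $q\notin\ker\pi_*$. Since $1-\vec x^{\vec u}\in\ker\pi_*$ and $\ker\pi_*$ is prime, $1-\vec x^{\vec u}$ cannot divide $q$. Hence $\ord_{1-\vec x^{\vec u}}\psi_S\ge 0$, a contradiction.

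\emph{Step 1: reduce to edges parallel to $\vec u$ that leave $S$.} By \Cref{lemma:polyhedron-denominator}, applied to vertex cones, $\ord_{1-\vec x^{\vec u}}\varsigma(K_v;\vec x)\ge0$ for every vertex $v$ lying on no edge parallel to $\vec u$. Every edge parallel to $\vec u$ is contracted. So, as in the proof of \Cref{thm:projected-brion}, such edges have two vertex endpoints and each vertex lies on at most one of them. Edges with both endpoints in $S$ contribute nonnegative order by \Cref{lemma:edge-cancellation}. It therefore remains to treat
\[
\chi:=\sum_{e'}\varsigma(K_{w(e')};\vec x),
\]
where $e'$ runs over the edges parallel to $\vec u$ with exactly one endpoint $w(e')\in S$. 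This set of edges is nonempty, since it contains $e$. I must show $\ord_{1-\vec x^{\vec u}}\chi<0$.

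\emph{Step 2: compute the residue via the quotient by $\vec u$.} Let $\rho\colon\Z^n\to\Z^n/\Z\vec u$, as in the proof of \Cref{lemma:edge-cancellation}. That proof shows $(1-\vec x^{\vec u})\varsigma(K_{w(e')};\vec x)\in\K[\vec x^{\pm1}]_{\ker\rho_*}$ with pushforward $\pm\varsigma(\rho(T_{e'});\vec y)$. Here $T_{e'}$ is the tangent cone along $e'$, and the sign depends on whether $\vec u$ or $-\vec u$ points into $K_{w(e')}$. Consequently $\ord_{1-\vec x^{\vec u}}\chi\ge0$ holds if and only if $(1-\vec x^{\vec u})\chi\in\ker\rho_*$, that is, if and only if
\[
\sum_{e'}\pm\varsigma(\rho(T_{e'});\vec y)=0.
\]
So the goal becomes showing this signed sum is nonzero.

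\emph{Step 3: the cones have distinct lattice apexes.} Since $e'$ is a face of $T_{e'}$ with direction exactly $\R\vec u$, the image $C_{e'}:=\rho(T_{e'})$ is a pointed cone with apex $a_{e'}=\rho(e')$. The hypothesis that $e'$ lies on a line through a lattice point makes $a_{e'}$ a lattice point. The apexes are pairwise distinct: two distinct edges of $P$ cannot lie on a common line, because that line would meet $P$ in a convex segment containing both edges, contradicting that each edge is a face.

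\emph{Step 4: nonvanishing, the main obstacle.} I need $\varsigma\bigl(\sum\pm\mathbf 1_{C_{e'}}\bigr)\ne0$. I plan to invoke the theorem of Lawrence and Khovanskii--Pukhlikov that the kernel of $\varsigma$ on convex chains consists exactly of chains $\equiv0$ modulo lineality. It then suffices to show $\sum\pm\mathbf 1_{C_{e'}}\not\equiv0$. For this, fix one apex $a=a_{e'}$ and apply the tangent-cone-at-$a$ valuation, which sends a polyhedron $Q$ to $\mathbf 1_{a+\R_{\ge0}(Q-a)}$ if $a\in Q$ and to $0$ otherwise. This operation is linear on $\cV$ and preserves polyhedra containing lines. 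It sends $\pm\mathbf 1_{C_{e'}}$ to itself. Every other $C_{e''}$ either misses $a$ or contains $a$ as a non-apex point, in which case its tangent cone at $a$ contains a line. Hence the image is $\pm\mathbf 1_{C_{e'}}$ modulo lineality, and this is not $\equiv0$ because $\varsigma$ of a nonempty pointed cone is nonzero (its series is nonzero and supported in a pointed cone). If citing that kernel theorem proves undesirable, the fallback is to compare Laurent series directly after choosing $a$ extremal for a generic functional. That route is where I expect the real difficulty, since the cones $C_{e'}$ may have different recession cones.
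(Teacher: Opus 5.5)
Your Steps 1--3 match the paper's reduction. You pass to $\rho\colon\Z^n\to\Z^n/\Z\vec u$ and reduce to showing $\sum_{e'}\pm\varsigma(C_{e'};\vec y)\neq 0$, summed over the edges parallel to $\vec u$ with exactly one endpoint in $S$, where the apexes are distinct lattice points.

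The gap is in Step 4. The result you attribute to Lawrence and Khovanskii--Pukhlikov only goes one way: chains $\equiv 0$ lie in $\ker\varsigma$, but the converse is false. Replacing ``$\equiv 0$'' by ``$\in\ker\varsigma$'' does not rescue the argument, because the tangent-cone map at $a$ does not preserve $\ker\varsigma$. More decisively, Step 4 uses nothing about the $C_{e'}$ beyond their being pointed cones with distinct lattice apexes, and for such data the conclusion is false. In $\R^1$,
\[
\varsigma([0,\infty);y)+\varsigma((-\infty,-1];y)=\frac{1}{1-y}+\frac{y^{-1}}{1-y^{-1}}=0 .
\]
Yet $\mathbf 1_{[0,\infty)}+\mathbf 1_{(-\infty,-1]}=\mathbf 1_{\R}-\mathbf 1_{(-1,0)}\not\equiv 0$, and its tangent cone at the apex $0$ is $\mathbf 1_{[0,\infty)}\not\equiv 0$. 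So your argument would ``prove'' this sum is nonzero.

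The missing idea is the structure that excludes such configurations. The cone $C_{e'}=\rho(T_{e'})$ is the vertex cone at $q_{e'}:=\rho(e')$ of the single polyhedron $\rho(P)\subseteq\R^n/\R\vec u$. Indeed, $\rho(P)\subseteq\rho(T_{e'})$, which is pointed with apex $q_{e'}\in\rho(P)$. Choose an integral functional $\ell$ that is minimized on $\rho(P)$ only at one apex $q=q_{e_0}$, and is nonzero on all rays of all the $C_{e'}$. Then $\ell>0$ on the rays of $C_q$, and $\ell(q_{e'})>\ell(q)$ for $e'\neq e_0$.

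With this, your fallback goes through; the differing recession cones are handled by flipping. Triangulate each $C_{e'}$ without new rays and flip the rays on which $\ell<0$. This writes $\varsigma(C_{e'};\vec y)$ as a signed sum of transforms of half-open cones with apex $q_{e'}$ and all rays in $\{\ell>0\}$; nothing is flipped at $q$. These half-open cones have finite fibres under $\ell$, so \Cref{thm:phi-localizes} lets you push forward along $\ell$ and compare series. Cones at $q_{e'}\neq q$ contribute only in degrees above $\ell(q)$, while $C_q$ contributes exactly $\pm1$ in degree $\ell(q)$, from the lattice point $q$. A nonzero series bounded below has nonzero rational function, which gives the contradiction. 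This is the paper's argument.

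A smaller point: in Step 1 you justify that all edges parallel to $\vec u$ are bounded by citing the proof of \Cref{thm:projected-brion}. That proof uses the hypothesis that no unbounded edge is contracted, which is not assumed here. The claim still holds. If $\pm\vec u\in\rec(P)$, the line through the bounded face $e$ would meet $P$ in an unbounded set, but that intersection must be contained in $e$.
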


\begin{proof}
    Suppose some contracted edge, with primitive direction $\vec u$, has exactly one endpoint in $S$,
    and let $\rho\colon \Z^n \to \Z^n/\Z\vec u$ be the quotient map. Since $1 - \vec x^{\vec u}$ is a
    prime in $\ker \pi_*$, we have $\ord_{1 - \vec x^{\vec u}} \psi_S \geq 0$, so
    \[
        \sum_{v \in S} \rho_*\bigl((1 - \vec x^{\vec u})\varsigma(K_v; \vec x)\bigr)
        = \rho_*\bigl((1 - \vec x^{\vec u})\psi_S\bigr) = 0.
    \]
    By the proof of \Cref{lemma:edge-cancellation}, a vertex without an edge parallel to $\vec u$
    contributes nothing to this sum, the two endpoints of such an edge $e$ contribute
    $\pm\varsigma(C_e; \vec y)$ with opposite signs, where $C_e$ is the image of the tangent cone along $e$ in
    $\R^n/\R\vec u$, and pairs inside $S$ cancel. So $\sum_{e} \pm\varsigma(C_e; \vec y) = 0$, the sum running
    over the edges $e$ parallel to $\vec u$ with exactly one endpoint in $S$, with sign $+$ if $e$
    leaves that endpoint in direction $\vec u$ and $-$ otherwise. The sum is nonempty. The cone $C_e$
    is the vertex cone of the projection of $P$ to $\R^n/\R\vec u$ at the image $q_e$ of $e$, and
    distinct edges have distinct images. Choose a linear functional $\ell$ on $\Z^n/\Z\vec u$ that is
    minimized on this projection only at one of the $q_e$ in the sum, say $q$, and is nonzero on all
    rays of the $C_e$. Triangulating each $C_e$ without new rays and flipping the rays on which
    $\ell < 0$ writes $\varsigma(C_e; \vec y)$ as a signed sum of transforms of half-open cones with
    apex $q_e$ and all rays in $\{\ell > 0\}$, closed if nothing is flipped. Pushing forward along $\ell$
    (\Cref{thm:phi-localizes}), the cones at $q_e$ contribute only in degrees at least $\ell(q_e)$,
    and $\ell(q_e) > \ell(q)$ for $q_e \neq q$. The cone at $q$ is not flipped, and $q$ is a lattice
    point by hypothesis, so the coefficient in degree $\ell(q)$ is $\pm 1$, a contradiction.
    Hence $S$ is a union of contracted classes.
\end{proof}

The hypothesis cannot be dropped. For $P = [1/3, 2/3]^2 \times [0, 1]$ and $\pi(\vec a) = a_3$ the
bottom four vertices form one class, their terms are
$\pm x_1 x_2/((1 - x_1)(1 - x_2)(1 - x_3))$, and both ways of pairing them along edges give sums
equal to $0$. For instance $S = \{(1/3, 1/3, 0), (2/3, 1/3, 0)\}$ has $\psi_S = 0$ but is not a union of
contracted classes.

The contracted classes also carry geometric meaning. For a lattice polytope they are the
vertex sets corresponding to the connected components of the fixed locus of the associated toric variety under the action of a subtorus, and
\Cref{thm:projected-brion} is the localization formula for that subtorus, with one term per
component.

\begin{remark}[Localization]\label{rem:localization}
    Let $P$ be a full-dimensional lattice polytope, not necessarily smooth, $X$ its projective toric variety with torus
    $T$, and $\cL$ the ample line bundle corresponding to
    $P$. Then $\sigma(P; \vec x)$ is the character of $H^0(X, \cL)$
    \cite[Proposition 4.3.3]{CoxLittleSchenck11}, which is also the equivariant
    Euler characteristic of $\cL$ by Demazure vanishing. Brion's theorem then corresponds to
    the localization formula in
    equivariant $K$-theory for $T$ \cite[Section 2.3]{Brion88}. The $T$-fixed points of $X$ correspond to the
    vertices of $P$, and the fixed point associated with $v$ contributes $\varsigma(K_v; \vec x)$, the character of
    the sections of $\cL$ over the affine chart at $v$.

    The projection $\pi$ corresponds to a subtorus $T' \subseteq T$, and $\pi_*$ restricts
    characters to $T'$. A face $F$ of $P$ corresponds to a $T$-orbit $O_F$ of $X$, which is fixed pointwise by $T'$ exactly when $F$ is contracted. Vertices are joined by contracted faces precisely when they are joined by contracted edges, so the contracted classes are the vertex sets corresponding to the connected components
    of the fixed locus $X^{T'}$.
    Thomason's concentration theorem \cite[Th\'eor\`eme 2.1]{Thomason92}, which also holds for singular
    $X$, gives a formula analogous to the localization formula in equivariant $K$-theory, for the fixed locus $X^{T'}$. Along with Demazure vanishing, this writes the character of $H^0(X, \cL)$ as a sum of contributions of the connected components of $X^{T'}$, each regular along
    $\ker \pi_*$. These contributions are the terms of \Cref{thm:projected-brion}.
\end{remark}

\begin{remark}[Twisted transforms and group rings]\label{rem:twisted}
    Let $\theta: \Z^n \to \K^\times$ be a homomorphism and
    \begin{align*}
    \pi_\theta: \K[\vec x^{\pm 1}] &\to \K[\vec y^{\pm 1}]\\
    \vec x^{\vec a} &\mapsto \theta(\vec a)\vec y^{\pi(\vec a)}
    \end{align*}
    the twisted pushforward. Its kernel is prime, and $\pi_\theta$ extends to the localization at
    that kernel. Call an edge with direction $\vec u$ \emph{contracted by $\pi_\theta$} if
    $\pi(\vec u) = 0$ and $\theta(\vec u) = 1$. Then \Cref{thm:phi-localizes} and
    \Cref{thm:projected-brion} hold with $\pi_*$ replaced by
    $\pi_\theta$ and contraction understood with respect to $\pi_\theta$, with the same proofs.

    Twisted pushforwards describe transforms with values in group rings. Let $\varphi: \Z^n \to G$ be a homomorphism to a finitely generated abelian group $G$, with
    torsion subgroup $H$, and
    $\varphi_*: \vec x^{\vec a} \mapsto \varphi(\vec a) \in \K[G]$, so that
    $\varphi_*\sigma(P; \vec x)$ records the lattice points of $P$ in the fibres of $\varphi$. It is the
    transform of \cite[Theorem 1.8]{EhrhartGroupRings} at $\vec x = \vec 1$, whose variables $\vec x$
    keep the denominators $1 - \varphi(\vec u)\vec x^{\vec u}$ from degenerating. Choose a splitting
    $G = H \oplus \Z^m$ and write $\varphi = (\tau, \pi)$ accordingly. Suppose that $\K$ contains a
    primitive $e$-th root of unity, where $e$ is the exponent of $H$. Then the group $\widehat H$ of
    characters $\chi: H \to \K^\times$ has $|H|$ elements, which are linearly independent by
    Dedekind's lemma, so that $t \mapsto (\chi(t))_\chi$ is an isomorphism
    $\K[H] \cong \K^{\widehat H}$ and
    \[
        \K[G] \cong \prod_{\chi \in \widehat H} \K[\vec y^{\pm 1}], \qquad
        \varphi_* \longmapsto \bigl(\pi_{\chi \circ \tau}\bigr)_{\chi},
    \]
    a product of twisted pushforwards. If no unbounded edge of $P$ is contracted by $\pi$, then by
    the above each component $\pi_{\chi \circ \tau}\varsigma(P; \vec x)$ is given by a
    projected Brion formula. The grouping depends on $\chi$, as an edge with $\pi(\vec u) = 0$ is
    contracted by $\pi_{\chi \circ \tau}$ exactly when $\chi(\tau(\vec u)) = 1$. When
    $\rec(P) \cap \ker \pi = \{0\}$ the components are the rational functions of the twisted fibre
    sums, and inverting $\K[H] \cong \K^{\widehat H}$ recovers the fibre counts of $\varphi$. For $G$
    finite, that is, $m = 0$, a pointed $P$ satisfying the hypothesis is bounded. The trivial character then
    contracts every edge, and its single class gives the lattice point count of $P$, while the other
    characters give finer groupings, and all of them together give the numbers of lattice points of
    $P$ in the fibres of $\varphi$.
\end{remark}

\begin{remark}[Computation]\label{rem:computation}
    The grouping into contracted classes is not needed to compute a short rational representation of
    $\varsigma_\pi(P; \vec y)$. Given any short rational representation of $\varsigma(P; \vec x)$,
    the monomial substitution of \cite[Theorem 2.6]{BarvinokWoods03} produces one for
    $\varsigma_\pi(P; \vec y)$. Its hypothesis, that the image of the substitution is not contained in
    the polar set, is equivalent to $\varsigma(P; \vec x)$ having an image under $\pi_*$, which holds by
    \Cref{lemma:polyhedron-denominator} when no unbounded edge of $P$ is contracted, as then no factor
    of the denominator there lies in $\ker \pi_*$. Their method cancels the
    poles of all terms at once along a generic direction. What \Cref{thm:projected-brion} adds is that
    the cancellation already happens within each contracted class, which is what the proof of
    \Cref{thm:linear-family} exploits.
\end{remark}

\section{Linear families of polytopes}\label{sec:linear-families}

We now apply projected Brion's theorem to linear families of polytopes. The main result of this section is \Cref{thm:linear-family}, which makes explicit how the projected integer-point transform of a linear family depends on its parameter. In this section we assume $\K$ is of characteristic $0$.

For a projection to a point, the lattice point count of a lattice family is a polynomial in the
parameter by McMullen's theorem \cite{McMullen78}, see also \cite[Chapter 18]{Barvinok08}. For general $\pi$, the fibre counts are piecewise quasi-polynomial in the
parameter, as they are values of a vector partition function, and a parametric Brion
formula, with vertices depending linearly on the parameter, can be pushed forward along $\pi$ term by
term when no edge is contracted \cite[Section 3.4]{BrionVergne97}. \Cref{thm:linear-family} extends that
formula to arbitrary $\pi$, with one term for each contracted class.

\subsection{Projected Brion for linear families}\label{sec:linear-family-theorem}

Denote by $\cP(\Q^n)$ the set of rational polytopes in $\R^n$. This may be seen as a subset of the space of convex chains $\cV$. There we have a convolution product
$*$, which satisfies
\[ \mathbf 1_P * \mathbf 1_Q = \mathbf 1_{P + Q} \]
for all rational polyhedra $P, Q$.
Its neutral element is $\mathbf 1_{\{0\}}$, and for every rational polytope $P$, the convex chain
$\mathbf 1_P$ is invertible. Denote by $\cP^*(\Q^n)$ the groupification of $\cP(\Q^n)$ under Minkowski addition, which we may see by the above as a subset of $\cV$ \cite{PukhlikovKhovanskii92}.

A \emph{linear family of polytopes} is a map $\Delta: \Q^r \to \cP^*(\Q^n)$ that is $\Q$-linear for the vector space structure on $\cP^*(\Q^n)$ given by the convolution product and dilation. We further assume that there exists a
full-dimensional rational cone $C \subseteq \Q^r$ such that
$\Delta(C) \subseteq \cP(\Q^n)$.
Then for any $\vec c \in \relint(C)$, the polytopes $\Delta(\vec c)$ have the same inner normal fan $\Sigma$.
Furthermore, the coordinates of their vertices vary linearly with $\vec c$, i.e., there
are linear maps $\vec v_\tau: \Q^r \to \Q^n$, indexed by the set $\Sigma(n)$ of maximal cones $\tau$ of $\Sigma$, such that the vertices of $\Delta(\vec c)$ are given by the
$\vec v_\tau(\vec c)$. The vertex cone of $\Delta(\vec c)$ at $\vec v_\tau(\vec c)$ is then given by
$\vec v_\tau(\vec c) + K_\tau$ for the fixed cone $K_\tau := \tau^\vee$.
In particular, the contracted classes of
$\Delta(\vec c)$ for $\vec c \in \relint(C)$ do not depend on $\vec c$.
We call them the contracted classes of $\Delta$ and regard them as sets of maximal cones of $\Sigma$.
We say that $\Delta$ is a
\emph{lattice family} if $\vec v_\tau(\vec c) \in \Z^n$ for $\vec c \in \Z^r$.

By Brion's theorem in its form for indicator functions \cite[proof of Theorem 3.5]{Barvinok1999AnAT},
for every $\vec c \in \relint(C)$ we have, modulo lineality,
\[
    \mathbf 1_{\Delta(\vec c)} \equiv \sum_{\tau \in \Sigma(n)} \mathbf 1_{\vec v_\tau(\vec c) + K_\tau}.
\]
With the convex chain $\Delta(\vec c)$ on the left, this congruence extends to all $\vec c \in \Q^r$ by \cite[\S4, Proposition 2]{PukhlikovKhovanskii92}, so in particular, we get that
\begin{equation}\label{eq:degenerate-brion}
\varsigma(\Delta(\vec c); \vec x) = \sum_{\tau \in \Sigma(n)} \varsigma(\vec v_\tau(\vec c) + K_\tau; \vec x)
\end{equation}
holds for all $\vec c \in \Q ^r$. Note that $\Delta(\vec c)$ has bounded support for all $\vec c$, so that $\varsigma(\Delta(\vec c); \vec x)$ is a Laurent polynomial and $\varsigma_\pi(\Delta(\vec c); \vec y) := \pi_*\varsigma(\Delta(\vec c); \vec x)$ is always defined.

\begin{theorem}\label{thm:linear-family}
    Let $\Delta$ be a lattice family as above. For a contracted class $S$ of $\Delta$ let $E_S$ hold one primitive direction vector for each edge direction occurring at a vertex of $S$, taken up to sign. Let also $d_S$ be the
    dimension of the span of $E_S \cap \ker \pi$. Then there are linear maps
    $\vec a_S: \Z^r \to \Z^m$ and polynomials $g_S$ in $\vec c$ of degree at most $d_S$ with
    coefficients in $\K[\vec y^{\pm 1}]$
    such that for all $\vec c \in \Z^r$
    \[
        \varsigma_\pi(\Delta(\vec c); \vec y) = \sum_S \vec y^{\vec a_S(\vec c)}\,
        \frac{g_S(\vec c)}{\prod_{\vec u \in E_S,\, \pi(\vec u) \neq 0}\bigl(1 - \vec y^{\pi(\vec u)}\bigr)} .
    \]
\end{theorem}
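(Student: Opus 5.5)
Starting from \eqref{eq:degenerate-brion}, which holds for all $\vec c \in \Z^r$, I will group the terms by contracted classes $S$ of $\Delta$. For $\vec c \in \relint(C)$, \Cref{thm:projected-brion} says that $\psi_S(\vec c) := \sum_{\tau \in S} \varsigma(\vec v_\tau(\vec c) + K_\tau; \vec x)$ lies in $\K[\vec x^{\pm 1}]_{\ker \pi_*}$. I want this for every $\vec c$, together with explicit control of its pushforward.

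\textbf{Normalising the class.} Fix a reference $\tau_0 \in S$ and set $\vec a_S(\vec c) := \pi(\vec v_{\tau_0}(\vec c))$, which is linear with integer values on $\Z^r$. Since the class is connected through contracted edges, every difference $\vec v_\tau(\vec c) - \vec v_{\tau_0}(\vec c)$ is a linear function of $\vec c$ with values in $V_S := \operatorname{span}(E_S \cap \ker \pi)$. Hence $\pi(\vec v_\tau(\vec c)) = \vec a_S(\vec c)$ for all $\tau \in S$. The lattice $L_S := V_S \cap \Z^n$ has rank $d_S$, and each difference defines a linear map $\vec\ell_\tau: \Z^r \to L_S$.

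\textbf{Reduction to shifts.} Choose a denominator $q_S := \prod_{\vec u \in E_S}(1 - \vec x^{\vec u})$. By \Cref{lemma:polyhedron-denominator}, $q_S \,\varsigma(K_\tau; \vec x) = p_\tau$ is a Laurent polynomial independent of $\vec c$. Then
\[
    \psi_S(\vec c) = \vec x^{\vec v_{\tau_0}(\vec c)}\, \frac{\sum_{\tau \in S} \vec x^{\vec \ell_\tau(\vec c)}\, p_\tau}{q_S}.
\]
Split $q_S = q_S^0 q_S^1$, where $q_S^0$ collects the factors with $\pi(\vec u) = 0$. The task is to show that $N(\vec c) := \sum_\tau \vec x^{\vec\ell_\tau(\vec c)} p_\tau$ is divisible by $q_S^0$ for every $\vec c$, and that $\pi_*(N(\vec c)/q_S^0)$ is a polynomial in $\vec c$ of degree at most $d_S$.

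For divisibility: fix $\vec u$ with $\pi(\vec u)=0$. Localised at the prime $1-\vec x^{\vec u}$, the vanishing of $\ord$ in \Cref{lemma:edge-cancellation} (or the proof of \Cref{thm:projected-brion}) holds for $\vec c$ in the Zariski-dense set $\relint(C) \cap \Z^r$. I would transfer it to all $\vec c$ via an exponential-polynomial argument. Working in the coordinates $\vec z = 1 + \vec w$ on $L_S$, as in the proof of \Cref{thm:phi-localizes}, expand $\vec z^{\vec\ell_\tau(\vec c)} = \exp(\vec\ell_\tau(\vec c)\cdot \log(1+\vec w))$ as a power series in $\vec w$ whose coefficients are polynomials in $\vec c$. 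The coefficient of $\vec w^\beta$ has degree at most $|\beta|$ in $\vec c$.

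\textbf{Degree bound.} After multiplying by $1/q_S^0$, pushing forward means taking the $\vec w^0$ coefficient in $A = \K[\vec x^{\pm1}]_{\ker \pi_*}$, exactly as in the proof of \Cref{thm:phi-localizes}. Since $q_S^0$ involves only the $d_S$-dimensional lattice $L_S$, the form $q_S^0$ has order in $\vec w$ equal to $\#\{\vec u \in E_S : \pi(\vec u) = 0\}$. This count may exceed $d_S$, and that is the main obstacle. A naive count gives degree $\le \operatorname{ord}_{\vec w} q_S^0$, not $d_S$.

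To get $d_S$ instead, I would avoid dividing by $q_S^0$ directly. Use the identity (characteristic $0$) that $\psi_S(\vec c)$ is a finite combination $\sum_j \vec x^{\vec v_{\tau_0}(\vec c)}\, h_j(\vec c)\, \varsigma(\text{cone}_j)$. Concretely, I would apply \Cref{thm:phi-localizes} to the quotient $\rho: \Z^n \to \Z^n/L_S$ (which refines $\pi$ on this class) and compute $\rho_*\psi_S(\vec c)$ as a genuine lattice-point generating function. Decompose it into half-open cones as in \Cref{ex:infinite-fibres} and the proof of \Cref{prop:minimality}, with fibres along $L_S$ being polytopes of dimension $\le d_S$ varying linearly in $\vec c$. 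Their lattice-point counts are polynomials of degree $\le d_S$ by McMullen's theorem. The remaining denominators are then the factors $1 - \vec y^{\pi(\vec u)}$ with $\pi(\vec u)\neq 0$. Pushing $\rho_*$ further to $\pi_*$ yields $g_S$.

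The polynomiality-for-all-$\vec c$ then follows because both sides agree on the dense set $\relint(C)\cap\Z^r$ and are exponential-polynomial in $\vec c$. I expect the genuinely delicate step to be this last one: justifying the degree bound $d_S$ rather than the number of contracted edge directions.
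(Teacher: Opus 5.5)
\textbf{Where the argument holds.} Your normalisation, the splitting $q_S = q_S^0 q_S^1$, and the transfer of divisibility of $N(\vec c)$ by $q_S^0$ from $\relint(C)\cap\Z^r$ to all $\vec c$ are sound. The transfer works because the $\vec w$-coefficients of $N(\vec c)$ modulo $1-\vec x^{\vec u}$ are polynomials in $\vec c$, so Zariski density applies. Together these do prove the formula for every $\vec c\in\Z^r$. However, as you note, they only give $g_S(\vec c)=\pi_*(N(\vec c)/q_S^0)$ of degree at most $k_S:=\#\{\vec u\in E_S : \pi(\vec u)=0\}$, since the $\vec w$-constant term of $N/q_S^0$ is read off from the degree-$k_S$ part of $N$.

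\textbf{The gap.} The genuine gap is the step meant to improve $k_S$ to $d_S$; what you offer there is not an argument. First, you never show that $\psi_S(\vec c)$ is, modulo lineality, the transform of a chain with finite $\rho$-fibres. Contracted classes need not be faces (\Cref{ex:pyramid-classes}), so there is no natural fibre polytope. The flips in \Cref{ex:infinite-fibres} and in the proof of \Cref{prop:minimality} cannot help either: flipping a ray lying in $\ker\rho$ leaves the fibres along it infinite, so every vertex term keeps infinite fibres along $L_S$, and the cancellation exists only in the sum. Second, even granting such a chain, its fibres over $\vec b\in\Z^n/L_S$ are sections depending piecewise-linearly on $(\vec b,\vec c)$, not a Minkowski-linear family. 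So McMullen's theorem does not apply, and it would not assemble infinitely many fibres into one polynomial $g_S(\vec c)$ with coefficients in $\K[\vec y^{\pm1}]$ over the fixed denominator. Third, the closing ``exponential-polynomial'' extension is only asserted.

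\textbf{The missing idea.} You need to split the poles before taking constant terms. Triangulate each $K_\tau$ without new rays into simplicial cones $\nu$. The rays of $\nu$ are linearly independent, and those killed by $\pi$ lie in $E_S\cap\ker\pi$ up to sign, so there are $n_\nu\le d_S$ of them. The individual pieces are not in $\K[\vec x^{\pm1}]_{\ker\pi_*}$, so your multivariate $\vec w$-constant term is not available for them. The paper instead uses the one-parameter specialization $\vec x^{\vec u}\mapsto\vec y^{\pi(\vec u)}(1+s)^{\langle\vec u,\vec t\rangle}$ with $\vec t$ generic, under which each contracted factor becomes $s$ times a unit. Each piece then has a pole of order $n_\nu\le d_S$, and the $s^0$-coefficient involves only $\binom{\langle\vec v_\tau(\vec c),\vec t\rangle}{j}$ with $j\le d_S$. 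The negative powers of $s$ in the class sum have coefficients polynomial in $\vec c$ that vanish on $\relint(C)\cap\Z^r$ by \Cref{thm:projected-brion}, hence vanish identically. This also replaces your divisibility step.
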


\begin{proof}
    By \eqref{eq:degenerate-brion}, grouping the $\tau$ by contracted
    classes, for any $\vec c \in \Z^r$,
    \[
        \varsigma(\Delta(\vec c); \vec x) = \sum_S
        \underbrace{
            \sum_{\tau \in S} \vec x^{\vec v_\tau(\vec c)}\varsigma(K_\tau; \vec x)
        }_{=: \psi_S(\vec c)} .
    \]

    Fix a contracted class $S$. Its vertices lie in one
    fibre of $\pi$, so $\vec a_S(\vec c) := \pi(\vec v_\tau(\vec c))$ does not depend on
    $\tau \in S$.
    Triangulate each $K_\tau$ without adding new
    rays. Then $K_\tau$ is a signed sum of the simplicial cones $\nu$ appearing in the triangulation.
    For any such cone $\nu$,
    $\varsigma(\nu; \vec x) = p_\nu / \prod_{\vec u \in \gen(\nu)} (1 - \vec x^{\vec u})$ for a
    Laurent polynomial $p_\nu$, where $\gen(\nu)$ is the set of primitive ray generators of $\nu$.
    Each ray of $\nu$ appears in $E_S$ up to sign, so there exist Laurent polynomials $q_\nu$ such that
    \[
        \psi_S(\vec c) = \frac{1}{\prod_{\vec u \in E_S,\, \pi(\vec u) \neq 0}(1 - \vec x^{\vec u})}
        \sum_{\tau \in S} \sum_\nu
        \underbrace{
        \frac{\vec x^{\vec v_\tau(\vec c)} q_\nu}{\prod_{\vec u \in \gen(\nu),\, \pi(\vec u) = 0} (1 - \vec x^{\vec u})}}_{=: f_\nu(\vec c)} .
    \]

    Choose an integer vector $\vec t$ with $\langle \vec u, \vec t \rangle \neq 0$ for every $\vec u$ appearing in some $E_S$, and define the ring homomorphism
    \begin{align*}
        \phi: \K[\vec x^{\pm 1}] &\to \K[\vec y^{\pm 1}]\llbracket s \rrbracket\\
        \vec x^{\vec u} &\mapsto \vec y^{\pi(\vec u)} (1 + s)^{\langle \vec u, \vec t \rangle}.
    \end{align*}
    Then $\pi_*$ is the constant term of $\phi$ on $\K[\vec x^{\pm 1}]$.
    Note that 
    $\phi$ extends to a map $\K[\vec x^{\pm 1}]_{\ker \phi} \to \K(\vec y)(\!(s)\!)$ and that furthermore
    $\phi(\K[\vec x^{\pm 1}]_{\ker \pi_*}) \subseteq \K(\vec y)\llbracket s \rrbracket$.
    Since $\phi(1 - \vec x^{\vec u}) \neq 0$ for all $\vec u$ in the sets $E_S$,
    none of the denominators in our case lie in $\ker \phi$.
    Applying $\phi$ to $f_\nu(\vec c)$ takes each of the $n_\nu$ factors of the denominator to $s$
    times a unit of $\K\llbracket s \rrbracket$ and $\vec x^{\vec v_\tau(\vec c)}$ to
    $\vec y^{\vec a_S(\vec c)} (1 + s)^{\langle \vec v_\tau(\vec c), \vec t \rangle}$. So 
    \[
        \phi(f_\nu(\vec c)) = \vec y^{\vec a_S(\vec c)} s^{-n_\nu}(1 + s)^{\langle \vec v_\tau(\vec c), \vec t \rangle} \cdot \alpha_\nu,
    \]
    where $\alpha_\nu \in \K[\vec y^{\pm 1}]\llbracket s \rrbracket$ does not depend on $\vec c$.
    Then
    \[
        \phi(\psi_S(\vec c)) = \frac{\vec y^{\vec a_S(\vec c)}\, \widetilde{g}_S(\vec c)}
        {\prod_{\vec u \in E_S,\, \pi(\vec u) \neq 0} \phi(1 - \vec x^{\vec u})},
        \quad\text{where}\quad
        \widetilde{g}_S(\vec c) = \sum_{\tau \in S}\sum_{\nu} s^{-n_\nu}(1 + s)^{\langle\vec v_\tau(\vec c), \vec t\rangle}
        \cdot \alpha_\nu,
    \]
    and the factors of the denominator are units of $\K(\vec y)\llbracket s \rrbracket$ with
    constant terms $1 - \vec y^{\pi(\vec u)}$.
    The coefficient of $s^j$ in $(1 + s)^b$ is $\binom{b}{j}$, a polynomial of degree $j$
    in $b$, and $\langle \vec v_\tau(\vec c), \vec t \rangle$ is linear in $\vec c$, so for every
    $j \in \Z$ the coefficient of $s^j$ in
    $\widetilde{g}_S$ is a polynomial in
    $\vec c \in \Z^r$ with coefficients in $\K[\vec y^{\pm 1}]$. For all $j < 0$ it vanishes on
    $\relint(C) \cap \Z^r$. Indeed, for such $\vec c$, $\psi_S(\vec c) \in \K[\vec x^{\pm 1}]_{\ker \pi_*}$
    by \Cref{thm:projected-brion}, so $\phi(\psi_S(\vec c))$, and with it
    $\widetilde{g}_S(\vec c)$, lies in $\K(\vec y)\llbracket s \rrbracket$.
    Since $C$ is a full-dimensional cone, it follows that $[s^j]\widetilde{g}_S(\vec c) = 0$ and so
    $\widetilde{g}_S(\vec c) \in \K[\vec y^{\pm 1}]\llbracket s \rrbracket$
    for all $\vec c \in \Z^r$. Let 
    \[g_S(\vec c) = [s^0] \widetilde{g}_S(\vec c) = \sum_{\tau \in S} \sum_{\nu} [s^{n_\nu}] (1 + s)^{\langle \vec v_\tau(\vec c), \vec t \rangle} \alpha_\nu.\]
    Then $g_S$ is a polynomial in $\vec c$ with coefficients in $\K[\vec y^{\pm 1}]$ of degree at most the maximum of the $n_\nu$. The
    rays of $\nu$ are linearly independent, so $n_\nu \leq d_S$.

    Finally, we have that
    \[[s^0]\phi(\psi_S(\vec c)) = \frac{\vec y^{\vec a_S(\vec c)} g_S(\vec c)}{
    \prod_{\vec u \in E_S,\, \pi(\vec u) \neq 0} (1 - \vec y^{\pi(\vec u)})},\] 
    and so we conclude using \eqref{eq:degenerate-brion} by noting that
    $\varsigma_\pi(\Delta(\vec c); \vec y) = [s^0]\phi(\varsigma(\Delta(\vec c); \vec x))$.
\end{proof}

\begin{remark}\label{rem:rational-families}
    In our setup we assumed the existence of a full-dimensional cone $C$ such that $\Delta(C) \subseteq \cP(\Q^n)$. This is not strictly necessary. Write
    $\Delta(\vec e_i) = \mathbf 1_{P_i^+} * \mathbf 1_{P_i^-}^{-1}$ with rational polytopes $P_i^\pm$,
    and let $Q := \sum_i (P_i^+ + P_i^-)$. By linearity of $\Delta$,
    \[
        \Delta(\vec c) * \mathbf 1_{tQ} = \mathbf 1_{\sum_i (t + c_i) P_i^+ + (t - c_i) P_i^-},
    \]
    which is a polytope whenever $t \geq \lvert c_i \rvert$ for all $i$. So $\Delta$ is the slice
    $t = 0$ of the linear family $(\vec c, t) \mapsto \Delta(\vec c) * \mathbf 1_{tQ}$, which
    satisfies our assumption on the cone $\{t \geq \lvert c_i \rvert \text{ for all } i\}$, and which is
    a lattice family if the $P_i^\pm$ are lattice polytopes. 
    
    \Cref{thm:linear-family} also extends to families which are not lattice families.
    The vectors $\vec c \in \Z^r$ with $\vec v_\tau(\vec c) \in \Z^n$ for all $\tau$ form a sublattice
    $L$ of finite
    index, and on a coset $\vec c_0 + L$ the proof applies verbatim to the terms
    $\vec x^{\vec v_\tau(\vec c - \vec c_0)} \varsigma(\vec v_\tau(\vec c_0) + K_\tau; \vec x)$, so that
    the same formula holds with $\vec a_S$ affine and $g_S$ polynomial on each coset of $L$, 
    hence $g_S$ quasi-polynomial in $\vec c$.
\end{remark}

\begin{remark}\label{rem:peterson}
    Peterson \cite[Theorem 4.4]{Peterson23} obtained a similar result in a slightly different
    setting. He considered the sum
    \[
        S(P; \xi) = \sum_{\vec m \in \Z^n \cap P} e^{\langle \xi, \vec m\rangle}
    \]
    for a fixed linear functional $\xi\colon \Z^n \to \C$, and showed that
    \[
        S(\Delta(\vec c); \xi) = \sum_\tau Q_\tau(\vec c)\, e^{\langle \xi, \vec v_\tau(\vec c) \rangle}
    \]
    with polynomials $Q_\tau$ (see also \cite[(14)]{Peterson25}, where he notes that it follows from
    his degenerate Brion formula). In his setting $\xi$ plays the role of our projection $\pi$.
    The proof of \Cref{thm:linear-family} follows an argument similar to Peterson's, but the result is
    packaged differently. Here the exponents of $\vec y$ keep track of the fibres of $\pi$, and the
    coefficients of $\varsigma_\pi(\Delta(\vec c); \vec y)$ are the fibre counts. In his setting the
    fibres appear only through the exponents $\langle \xi, \vec m \rangle$, and recovering the counts
    requires varying $\xi$ (and with it the $Q_\tau$).
\end{remark}

\subsection{Gelfand--Zetlin and string polytopes}\label{sec:gz-string}

We end this section with examples of linear families coming from representation theory. First we recall the definition of Gelfand--Zetlin polytopes.

For a dominant weight $\lambda = (\lambda_1, \dots, \lambda_n)$ of $\mathrm{GL}_n$, that is,
$\lambda_1 \geq \dots \geq \lambda_n$, the Gelfand--Zetlin polytope $\mathrm{GZ}(\lambda)$ is the
set of real triangular arrays $(a_{i,j})_{1 \leq i \leq n - 1,\, 1 \leq j \leq n - i}$ satisfying
the interlacing conditions
\[
    a_{i-1,j} \geq a_{i,j} \geq a_{i-1,j+1}, \qquad a_{0,j} := \lambda_j .
\]
Its lattice points index an eigenbasis of the irreducible representation $V_\lambda$ with highest weight $\lambda$, with respect to the maximal torus of $\mathrm{GL}_n$. The basis vector corresponding to an array
$(a_{i,j})$ has the weight $\mu$ with $\mu_i = \sum_j a_{i-1,j} - \sum_j a_{i,j}$ for
$1 \leq i \leq n$. The assignment $(a_{i,j}) \mapsto \mu$ is an affine map $\pi$, and its fibre counts are the weight multiplicities of $V_\lambda$. All the results thus far also apply to affine $\pi$, so one can apply \Cref{thm:projected-brion} to $\mathrm{GZ}(\lambda)$ to compute the character of $V_\lambda$. It was shown in \cite[Proposition 2.3]{Makhlin16} that the projection $\pi$ contracts no edges of
$\mathrm{GZ}(\lambda)$, so the contracted classes are singletons. Moreover, \cite[Theorem 1.3]{Makhlin16} shows that for regular $\lambda$, the polytope $\mathrm{GZ}(\lambda)$ has exactly $n!$ simple vertices,
whose contributions are precisely the $n!$ summands of Weyl's character formula, and every other vertex contributes $0$ after applying $\pi_*$.
Finally, the map $\lambda \mapsto \mathrm{GZ}(\lambda)$ is additive on dominant weights, so these polytopes form a linear family.
This gives a way to define $\mathrm{GZ}(\lambda)$, and in particular its projected integer-point transform
$\sigma_\pi(\mathrm{GZ}(\lambda); \vec y)$, for an arbitrary weight $\lambda \in \Z^n$.

A similar construction exists for any connected reductive group $G$. Let $B \subseteq G$ be a Borel subgroup with maximal torus $T$, let $W$ be the Weyl group, and let
$\alpha_1, \dots, \alpha_\ell$ be the simple roots. Again, to each dominant weight $\lambda$ one associates a polytope $S(\lambda)$, the string polytope \cite{Littelmann98}. Unlike the Gelfand--Zetlin polytope, it depends on the additional choice of a reduced word $\underline w_0$ of the longest element $w_0$ of the Weyl group $W$. As in the case of $\mathrm{GL}_n$, the lattice points of the string polytope
$S(\lambda) \subseteq \R^N$ index a basis of the irreducible representation $V_\lambda$ of highest
weight $\lambda$, the point $\vec a \in S(\lambda)$ carrying the weight
$\lambda - \sum_k a_k \alpha_{i_k}$ \cite[Proposition 1.5]{Littelmann98}. Let
$\pi(\vec a) := \lambda - \sum_k a_k \alpha_{i_k}$ be the corresponding affine projection.
Then Littelmann's character formula reads
$\chi_T(V_\lambda) = \sigma_\pi(S(\lambda); \vec y)$ for every dominant
$\lambda$. In type $A$, for a suitable choice of reduced word, the string polytope $S(\lambda)$ recovers $\mathrm{GZ}(\lambda)$ up to a unimodular affine map depending on $\lambda$.

As in the case of Gelfand--Zetlin polytopes, string polytopes are homogeneous in $\lambda$, that is, $S(k\lambda) = kS(\lambda)$.
However, string polytopes are not additive in general: $S(\lambda+\mu) = S(\lambda) + S(\mu)$ can fail. Nevertheless, there is a fan $\mathcal{F}$ supported on the positive Weyl chamber such that over every cone $\tau \in \mathcal{F}$ the string polytopes form a linear family. For a maximal cone $\tau \in \mathcal{F}$, the restriction of the string polytopes to $\tau$ extends to a linear family $S_\tau$ on all weights, which allows us to define the projected integer-point transform $\sigma_\pi(S_\tau(\lambda); \vec y)$ for an arbitrary weight $\lambda$. One corollary of \Cref{thm:linear-family} (see \cite[Proposition 5.10]{MoninSmirnov26}) is that this extension of the projected integer-point transform does not depend on the choice of the maximal cone of $\mathcal{F}$; that is, for any two maximal cones $\tau, \tau' \in \mathcal{F}$ we have
\[
\sigma_\pi(S_\tau(\lambda); \vec y) = \sigma_\pi(S_{\tau'}(\lambda); \vec y) \quad \text{for every weight } \lambda.
\]
This observation was used in \cite{MoninSmirnov26} to describe the $T$-equivariant $K$-theory of generalized flag varieties
$G/B$.

We finish with an example of a string polytope whose weight map contracts edges.

\begin{example}[String polytopes]\label{ex:string-polytopes}
    Let $G = \mathrm{Sp}_6$ be the group of type $C_3$, and consider the string polytopes of
    Littelmann's reduced word, which in Bourbaki's labelling is $(3,2,3,2,1,2,3,2,1)$
    \cite[Section 6]{Littelmann98}. As usual, we denote by $\rho$ the sum of the fundamental weights. Then the points $(0,1,1,0,1,1,1,1,0)$ and $(0,2,2,0,1,1,0,0,0)$ span
    an edge of $S(\rho)$, and $\pi$ kills its direction since
    $\alpha_{i_2} + \alpha_{i_3} = \alpha_{i_7} + \alpha_{i_8}$. The transforms of the vertex cones
    at such an edge need not have an image under $\pi_*$, and only their sum over the contracted
    class does.
\end{example}

Although the contracted classes of string polytopes need not be singletons, we expect the results of
\cite{Makhlin16} to extend to string polytopes. More concretely, for every reduced word of $w_0$ and every dominant
$\lambda$, each vertex of $S(\lambda)$ whose weight $\mu$ lies in the orbit $W\lambda$ should form a
contracted class of its own, and the corresponding term in projected Brion's theorem should coincide with the sum of the summands of Weyl's character formula over
the $w \in W$ with $w\lambda = \mu$. Moreover, the term of every other contracted class should be $0$.
\Cref{thm:projected-brion} would then recover Weyl's character formula term by term for all string
polytopes. Beyond the Gelfand--Zetlin case, where it is Makhlin's theorem, we have checked this by computer for all $42$ reduced words of
$w_0$ in each of the types $B_3$ and $C_3$, at several regular and singular weights, evaluating the term of every
contracted class. We plan to address the general statement in future work.

\printbibliography

\Addresses

\end{document}